\newtheorem{lemma}{\bf{Lemma} }[section]
\newtheorem{proposition}{\bf{Proposition}}[section]
\newtheorem{theorem}{\bf{Theorem}}[section]
\newtheorem{remark}{\sc{Remark} }[section]
\newtheorem{definition}{\sc{Definition} }[section]
\begin{document}

\title{On the wellposedness for a fuel cell problem}

\author{Luisa Consiglieri}
\address{Luisa Consiglieri, Independent Researcher Professor, European Union}
\urladdr{\href{http://sites.google.com/site/luisaconsiglieri}{http://sites.google.com/site/luisaconsiglieri}}

\begin{abstract} 
This paper investigates the existence  of weak solutions to two problems set of elliptic equations in adjoining domains,
 with Beavers--Joseph--Saffman and regularized Butler--Volmer boundary conditions being prescribed on the common interfaces, porous-fluid and membrane, respectively.
Mathematically, the modeling tool  is the coupled Stokes/Darcy problem,  which consists of  the Stokes equation
 on one part of the domain coupled to the Darcy equation, where the flow
velocities are small and mainly driven by the pressure gradient in porous medium,
completed by the thermoelectrochemical  (TEC) system, which consists of 
the energy equation and the mass transport associated with electrochemical reactions, where the fluxes are given by generalized Fourier, Fick and Ohm laws,
 by including the Dufour--Soret and Peltier--Seebeck cross effects, in the multidimensional domain.
The present model includes macrohomogeneous models for both hydrogen and methanol crossover.
The novelty in the presented model lies in the presence of the Joule effect into the Stokes/Darcy-TEC system 
altogether to the quasilinear character given by temperature dependence of
the physical parameters such as the viscosities and  the diffusion coefficients,
by the concentration-temperature dependence of cross-effects coefficients,
and by  the pressure dependence of the permeability. 
The purpose of the present work is to derive quantitative estimates for solutions to
explicit smallness conditions on the data.
 We use  fixed point and compactness arguments based on the quantitative  estimates of approximated solutions.
\end{abstract}

\keywords{PEM fuel cell; multiregion domain; Stokes--Darcy system; Beavers--Joseph--Saffman boundary condition; porous media;
heat-conducting fluids; electrochemistry.}

\subjclass[2020]{Primary: 76S05, 80A50; Secondary: 35Q35, 35Q79, 35J57, 35D30.}
\maketitle

\section{Introduction}
\label{int} 

In this paper, we study the so-called  fuel cells.
Our concern is on the mathematical analysis of thermoelectrochemical (TEC) models  from
 devices that convert the chemical energy from a fuel  into electricity. The chemical reaction produces charged  ions, which move on a membrane. 
Positively charged ions are conducted by the proton exchange membrane 
at low operating  temperature in polymer electrolyte membrane fuel cells (PEMFC) \cite{li2004,fuller,gott}
 and direct methanol fuel cells (DMFC) \cite{sund,wawa},
while negatively charged ions are conducted by adequate ionic condutors  at  high operating temperatures in
 solid oxide fuel cells  (SOFC) \cite{daud,fu}
and molten carbonate fuel cells (MCFC) \cite{Duijn}.
Although their differences, they all consist of two electrodes (one anode and one cathode),
an electrolyte membrane separator between the two electrodes, and  two or more channels.
  The domain consists of different pairwise disjoint  Lipschitz subdomains
(precisely, it is separated into five regions, and it has four interfaces of \((n-1)-\)dimension) as
sagittally illustrated in Fig. \ref{fpem}.
It includes the membrane medium  in contrast with the phase change models, whose consider the membrane as an interface separating the different conductive phases \cite{parma}.
We refer to  \cite{donald}  a numerical approach for the steady free boundary  value problem, on which mixed Dirichlet--Neumann conditions are specified,
motivated by the two-phase flow in fuel cell electrodes.

 It is widely recognized that the behavior of the components in
 the interface boundaries plays an essential role in the cell performance, and it can determine its life.
An exact solution of an electro-osmotic flow problem modeling  polymer electrolyte membranes
is derived in \cite{berg} under the assumption that Stokes flow is driven by only an external field,
not by a pressure gradient, in  an infinite cylindrical pore.
 In \cite{alam,shaw}, the authors combine a PEM fuel cell and electrical circuits
 to potentiate the  energy efficiency, to reduce the cost of FC technology, and to improve fuel usage.
Computational fluid dynamics (CFD) based tools are developed for PEMFC
(see \cite{djilali,cole,umwachen}, and the references therein).

The complexity of the present model is a true drawback
by the presence of both the cross-effects and the Joule effects altogether with different types of interfaces:
the fluid-porous interfaces that require stress boundary conditions such as the   Beavers--Joseph--Saffman interface condition,
and the membrane interface on that the  electrochemical reactions occur.
We refer to \cite{chern} the existence of a weak solution of a  1D half-cell model.

The coupled system of partial differential equations (Stokes/Darcy-TEC) is
quasilinear since the physical parameters such as the viscosity and the diffusion coefficients depend on the temperature
while the cross effects coefficients depend on the temperature and the concentrations.
Moreover,  the permeability depends on the pressure by the Klinkenberg equation. 
 It is known that regularity results are available whenever the Navier--Stokes--Fourier system has
constant coefficients \cite{bveiga}. We refer to \cite{cms2012} the study of the Beavers--Joseph--Saffman--Stokes--Darcy--Fourier problem.

The existence of weak solutions is established by applying a fixed point procedure  under some assumptions on the nonlinear terms.
 The use of the Tychonoff fixed point theorem  is being  somewhat standard.
 However, the existence  of the dissipation term requires some additional regularity  
and some small coefficient conditions were enforced.

 We confine ourselves  in the study of the proton exchange membrane  fuel cell.
 We focus our attention on H\(_2\)PEM fuel cells driven by gaseous hydrogen, but the present model
 may be include other cells as for instance direct methanol fuel cells operating on methanol in an aqueous solution.

The structure of the paper is as follows. We begin by introducing the concrete physical model under consideration.
 Next, the functional framework,
the data under consideration and  the main theorems are stated  in Section \ref{framework}.
Some auxiliary results are proved in Section  \ref{tdt}. In particular, the existence of an auxiliary velocity-pressure pair in
Subsection \ref{auxup}, and an auxiliary partial density-temperature-potential  triple solution in Subsection \ref{auxdi}.
In section \ref{fpthm},  the fixed point argument is applied to prove Theorem \ref{tmain}.

\section{Statement of the fuel cell problem}
\label{spemfc}

Let \(\Omega\) be a bounded multiregion domain   of \(\mathbb{R}^n\), \(n\geq 2\), that is,
\(\Omega=\mathrm{int}\left(\overline{\Omega}_\mathrm{f} \cup \overline{\Omega}_\mathrm{p}\right)\) is  a connected open set,
 with \({\Omega}_\mathrm{f} \) and \({\Omega}_\mathrm{p} \) being two disjoint open subsets of \(\Omega\).
The multidomain \(\Omega\) represents one single PEM fuel cell,
 which its 2D (two-dimensional) representations are schematically illustrated in Fig. \ref{fpem}.
\begin{figure}
\begin{multicols}{2}
\centering 
 \includegraphics[width=0.5\textwidth]{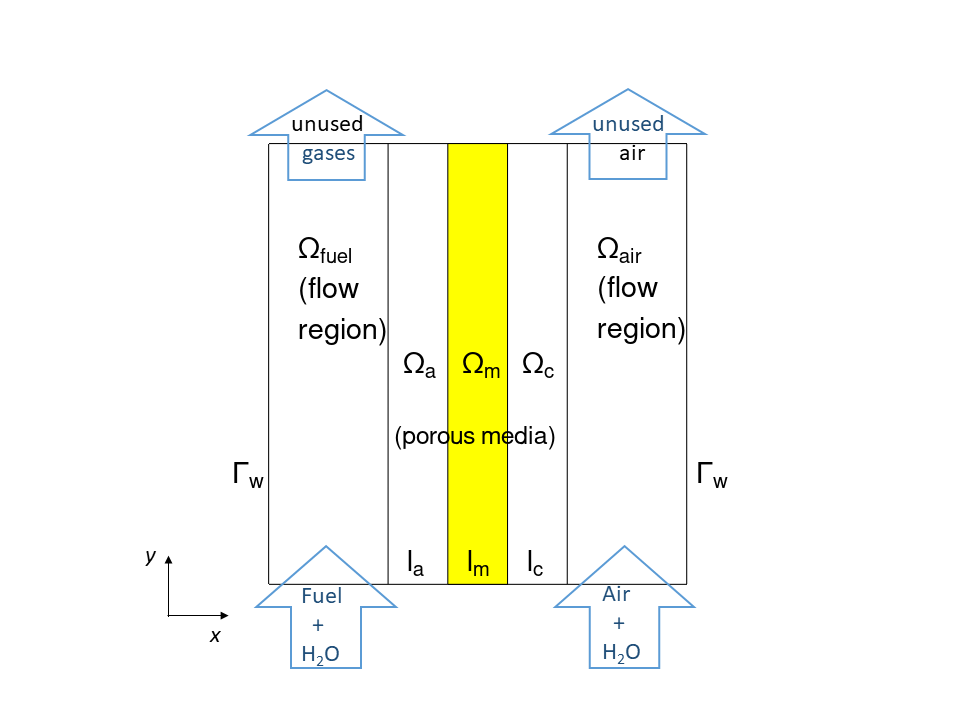} \\
 \includegraphics[width=0.5\textwidth]{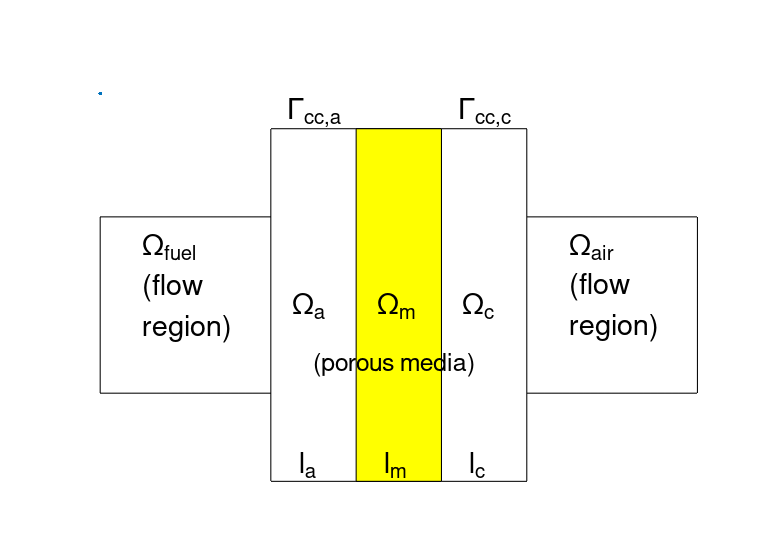}
\end{multicols}
\caption{The flow region \(\Omega_\mathrm{f}= \Omega_\mathrm{fuel}\cup \Omega_\mathrm{air}\) and
 the porous region \(\Omega_\mathrm{p}
= \Omega_\mathrm{a}\cup\overline{\Omega}_\mathrm{m}\cup\Omega_\mathrm{c}\)
(not in scale), with length \(l_\mathrm{a}+l_\mathrm{m}+l_\mathrm{c}<< L\) 
where \(L=1-10\,\si{\centi\metre}\) denotes each channel length. Left: \(xy\) cross-section.  Right: \(xz\) cross-section.}  \label{fpem}
\end{figure}

The fluid bidomain  \({\Omega}_\mathrm{f} \) consists of two channels,
namely the anodic fuel channel \(\Omega_\mathrm{fuel}\) and the cathodic air channel \(\Omega_\mathrm{air}\),
 constituted by  mixtures (since their noncontinuity) of the gas and liquid phases \cite{li2004}.

The membrane electrode assembly, what we call by porous domain  \({\Omega}_\mathrm{p} \), consists of the regions relative to
 the membrane separator \(\Omega_\mathrm{m}\) and  the backing and catalyst layers of the two electrodes.
The domain \(\Omega_\mathrm{m}\)  stands for the proton conducting membrane 
(\SIrange{20}{100}{\micro\metre} in thickness). It  accounts for the transport of
dissolved water (H\(_2\)O) and  the hydronium (H\(_3\)O\(^+\)) ions,
 and it is electrically insulating such that
 the electrons are forced to travel in an external circuit   from the anode to the cathode.
An usual catalyst layer,  between the membrane separator and the backing layer,
 can be assumed to have negligible measure
  (the backing layers are approximately \(l_a=l_c=\SI{200}{\micro\metre}\)  in thickness,
 while the catalyst layers are \SIrange{5}{10}{\micro\metre} \cite{li2004,li2009}),
  and it is denoted by \(\Gamma_\mathrm{CL}\).
Other interface is
the porous-fluid boundary \(\Gamma=\partial\Omega_\mathrm{p}\cap\Omega\).

The fuel (for instance, pure hydrogen \cite{norsk}
 or hydrocarbon type which includes diesel, methanol \cite{wawa} and chemical  hydrides) is oxidized
at the anode catalyst layer \(\Gamma_\mathrm{a}\),   
generating positively charged  ions and electrons.
 The positively charged  ions  travel through   \({\Omega}_\mathrm{m} \), while
 the traveling of the free electrons  produces the electric current in the backing layers, \(\Omega_\mathrm{a}\) 
 and \(\Omega_\mathrm{c}\), through an external circuit that it is attained by a
 current collector \(\Gamma_\mathrm{cc}\).
These two currents are interconnected through the electrochemical reactions.
At the cathode  catalyst layer \(\Gamma_\mathrm{c}\),
 the oxygen reduction occurs: hydrogen ions, electrons, and oxygen react to form water. We set
\[  \Omega_\mathrm{p}=\Omega_\mathrm{a}\cup\left(\overline{\Omega}_\mathrm{m}\cap\Omega\right)\cup\Omega_\mathrm{c}
  =\Omega_\mathrm{a}\cup\Gamma_\mathrm{a}\cup\Omega_\mathrm{m}\cup\Gamma_\mathrm{c}\cup\Omega_\mathrm{c}.
  \] 
Hereafter,  the subscripts, a and c, stand for anode and  cathode, respectively.

The general phenomenological fluxes, \(\mathbf{q}\) [\si{\watt\per \square\metre}],
  \(\mathbf{j}_{i}\)  [\si{\kilogram\per\second\per \square\metre}]
and \(\mathbf{j}\) [\si{\ampere\per \square\metre}],
  are explicitly driven by gradients of  the temperature  \(\theta\),
 the mass concentration vector \(\bm{\rho}\), and  the electric potential \(\phi\),
in the form (up to some temperature and concentration dependent factors)
\begin{align}\label{defji}
\mathbf{j}_{i} &= - D_{i} (\theta) \nabla \rho_{i} -
 \sum_{\genfrac{}{}{0pt}{2}{j=1}{j\not= {i}}}^\mathrm{I} D_{{i}j} (\theta) \nabla \rho_j
-\rho_{i}S_{i}( c_{i} ,\theta) \nabla\theta
-u_{i} \rho_{i}\nabla\phi; \\ \label{defq}
\mathbf{q} &= -R\theta^2 \sum_{j=1}^\mathrm{I} D'_j ( c_j, \theta) \nabla c_j -k (\theta) \nabla\theta
-\Pi (\theta) \sigma (\mathbf{c},\theta) \nabla\phi;\\
\mathbf{j} &= -F\sum_{j=1}^\mathrm{I} z_{j}D_{j}(\theta) \nabla c_j -\alpha_\mathrm{S} (\theta) \sigma (\mathbf{c},\theta) \nabla\theta
-\sigma(\mathbf{c},\theta) \nabla\phi, \nonumber
\end{align}
with \(i=1,\cdots,\mathrm{I} \), see \cite{lap2017,mex} and the references therein.
These include
the Fick law (with the diffusion coefficient \(D_i\) [\si{\metre\squared\per\second}]),
  the Fourier law (with  the thermal conductivity  \(k\)  [\si{\watt\per\metre\per\kelvin}]),
 the Ohm law  (with  the electrical conductivity  \(\sigma\)  [\si{\siemens\per\metre}]),
the Dufour--Soret  cross effect (with the Dufour coefficient \(D'_i\)  [\si{\metre\squared\per\second\per\kelvin}]
 and the Soret coefficient \(S_i\)   [\si{\metre\squared\per\second\per\kelvin}]), and
the Peltier--Seebeck cross effect (with the Peltier coefficient \(\Pi\) [\si{\volt}] and the Seebeck coefficient 
 \(\alpha_\mathrm{S}\)  [\si{\volt\per\kelvin}]  being correlated by the first Kelvin relation).

In the fuel cell model, the main contribution for the electric potential is given at the membrane interface (cf.  Subsection \ref{interface}),
and the electric flux is reduced to the Ohm law \cite{li2004,li2009}
\begin{equation}\label{ohm}
\mathbf{j} = -\sigma(\mathbf{c},\theta) \nabla\phi \quad\mbox{  in }\Omega_\mathrm{a}\cup\Omega_\mathrm{c}.
\end{equation}

Each  partial density is defined by
\begin{equation}
\rho_i = M_ic_i,
\end{equation}
where 
\( M_i\) denotes the molar mass [\si{\kilogram\per\mole}] and 
\(c_i\) is the  molar concentration [\si{\mole\per\cubic\metre}] of the species  \(i\).

 The mobility \(u_i\)  [\si{\metre\squared\per\second\per\volt}] satisfies the Nernst-Einstein relation \(u_i= z_i F D_i/(R\theta)\), and according to
Onsager reciprocal theorem, the two coupling coefficients are equal.
The universal constants are the so-called the Faraday constant
 \(F= \SI{9.6485e4}{\coulomb\per\mole}\), and the gas constant
 \(R= \SI{8.314}{\joule\per\mole\per\kelvin}\). 

 Hereafter the subscript \(i\) stands for the correspondence to the ionic component \(i=1,\cdots,\mathrm{I}\) 
intervened in the reaction process, with \(\mathrm{I}\in\mathbb{N}\) being either
 \(\mathrm{I}_\mathrm{p}\) whenever \(\Omega_\mathrm{p}\) or  \(\mathrm{I}_\mathrm{f}\) whenever \(\Omega_\mathrm{f}\).
To avoid confusing, we never show the components of vectors of \(\mathbb{R}^n\),  namely
the velocity vector or the gradient, in the present work.

\subsection{In the fluid bidomain  \({\Omega}_\mathrm{f}= \Omega_\mathrm{fuel}\cup \Omega_\mathrm{air} \)}

By the characteristics of the channels, the convection for fluid and heat flows may be neglected.

The governing equations are the conservation of mass, momentum,  species
and energy, a.e. in \(\Omega_\mathrm{f}\),
\begin{align}
\nabla\cdot(\rho \mathbf{u})=0; \label{mass}\\
-\nabla  \cdot\tau =-\nabla p ; \label{momentum}\\
\nabla\cdot(\mathbf{u} \rho_i)+\nabla\cdot \mathbf{j}_i=0; \label{species}\\
\label{heateqs}
\nabla\cdot \mathbf{q} =0,
\end{align}
for the uncharged species \(i=1,\cdots,\mathrm{I}\).
 The unknown functions are the density \(\rho\), the velocity \(\mathbf{u}=(u_x,u_y,u_z)\),
 the mass concentration vector \(\bm{\rho}=(\rho_1,\cdots, \rho_\mathrm{I})\) and the temperature \(\theta\).

We assume that the anode and cathode gas mixtures with water vapor act as ideal gases \cite{gott}, that is,
the pressure \(p\) obeys the Boyle--Marriotte law
\begin{equation}\label{boyle}
p = R_\mathrm{specific}\rho\theta,
\end{equation} 
where \( R_\mathrm{specific}=R/M\) with \( M\) denoting the molar mass [\si{\kilogram\per\mole}].
Moreover, the deviatoric stress tensor \(\tau=pI+\sigma\),
 where \(\sigma\) represents the Cauchy stress tensor and \( \mathsf{I}\) denotes the identity (\(n\times n\))-matrix.
The stress tensor \( \tau\), which is temperature dependent, obeys the constitutive law 
\begin{equation}\label{diff}
\tau =\mu(\theta)D\mathbf{u}+\lambda(\theta) \mathrm{tr}(D\mathbf{u}) \mathsf{I},
\qquad \mathrm{tr}(D\mathbf{u})=\mathsf{I}:D\mathbf{u}= \nabla\cdot\mathbf{u},
\end{equation}
where \( D=(\nabla+\nabla^T)/2\) denotes the symmetric gradient, and
 \( \mu\) and \( \lambda\) are the viscosity coefficients  in 
accordance with the second law of thermodynamics
\[
\mu(\theta)>0,\quad \nu(\theta) :=n\lambda(\theta)+\mu(\theta)\geq 0,
\] 
with \(\nu\) denoting the bulk (or volume) viscosity  and \(\mu/2\) being the shear (or dynamic) viscosity.
 Here we denote \(\zeta:\varsigma=\zeta_{ij}\varsigma_{ij}\)  taking into account the convention on
implicit summation over repeated indices.

Finally, we emphasize that there is no electric current in the fluid bidomain.

\subsection{Number of species \(\mathrm{I}\)}

 The  number of  species \(\mathrm{I}\in\mathbb{N}\) may indeed represent different numbers
 \(\mathrm{I}_\mathrm{a}\),
 \(\mathrm{I}_\mathrm{m}\) and  \(\mathrm{I}_\mathrm{c}\) corresponding to the domains
 \(\Omega_\mathrm{fuel}\cup\Omega_\mathrm{a}\), \(\Omega_\mathrm{m}\) and 
\(\Omega_\mathrm{air}\cup\Omega_\mathrm{c}\), respectively.

The flow domain \({\Omega}_\mathrm{f} \) accounts for the reactant gases 
(oxygen, nitrogen and water vapor on the cathode channel) and liquid water. 
In the  H\(_2\)PEMFC,  
 the dry hydrogen gas is humidified before of being introduced into the fuel channel  \(\Omega_\mathrm{fuel}\).
In the DMFC, the chemical reaction in the anode catalyst
layer is the methanol (CH\(_3\)OH) oxidation.

\subsubsection{The anodic fuel compartment \(\Omega_\mathrm{fuel}\cup\Omega_\mathrm{a}\)}
\label{anodic} 

 In the anodic region, 
different kinds of humidified fuel may be considered,  for instance:
 \begin{itemize}
 \item  H\(_2\), to produce the hydrogen oxidation reaction \cite{li2004,li2009,fuller}:
 H\(_2\) \(\rightarrow 2\)H\(^+ + 2\mathrm{e}^-\)  on  the membrane interface \(\Gamma_\mathrm{a}\), which means
\[
\mathrm{H}_2(g) +2\mathrm{H}_2\mathrm{O} (l) \rightarrow 2\mathrm{H\mathrm}_3\mathrm{O}^+(aq) + 2\mathrm{e}^-, \qquad
E^0= \SI{0.00}{\volt} . 
\]
The gas composition obeys 
\begin{align*}
\rho &= M(\mathrm{H}_2)c_{\mathrm{H}_2} + M(\mathrm{H}_2\mathrm{O})c_{\mathrm{H}_2\mathrm{O}}
\quad\mbox{ in } \Omega_\mathrm{fuel}\cup\Omega_\mathrm{a} ; \\
\rho &= Mc_{\mathrm{H}_3\mathrm{O}^+} + M(\mathrm{H}_2\mathrm{O})c_{\mathrm{H}_2\mathrm{O}} \quad\mbox{ in } \Omega_\mathrm{m} ,
\end{align*}
with  \(M(\mathrm{H}_2)= \SI{2}{\gram\per\mole}\), \(M(\mathrm{H}_2\mathrm{O})= \SI{18}{\gram\per\mole}\)
 and \(M=\SI{1}{\gram\per\mole}\).

\item methanol, to produce the oxidation reaction \cite{wawa}:
CH\(_3\)OH+H\(_2\)O \(\rightarrow \)  CO\(_2+6\)H\(^+ + 6\)e\(^-\)  
on  the membrane interface \(\Gamma_\mathrm{a}\).
 \end{itemize}

Then, we take \(i=\)  fuel, H\(_2\)O.

 \subsubsection{The cathodic air compartment \(\Omega_\mathrm{air}\cup\Omega_\mathrm{c}\)}
\label{cathodic}

In the cathodic region, the air undergoes the oxygen reduction reaction \cite{fuller,norsk}:
 O\(_2+4\)H\(^+ + 4\mathrm{e}^- \rightarrow 2\)H\(_2\)O\(\)
 on the membrane interface \(\Gamma_\mathrm{c}\), which means
\[
\mathrm{O}_2(g) +4\mathrm{H}_3\mathrm{O}^+ (aq)+ 4\mathrm{e}^- \rightarrow 6\mathrm{H}_2\mathrm{O}(l), \qquad
E^0=\SI{1.23}{\volt}.
\]
Then,  we take \(i=\)  O\(_2\), H\(_2\)O.
The liquid water byproduct drains away for a proper operating of the fuel cell.
The gas composition obeys
\[
\rho = M(\mathrm{H}_2\mathrm{O})c_{\mathrm{H}_2\mathrm{O}}+M(\mathrm{O}_2)c_{\mathrm{O}_2},
\]
with \(M(\mathrm{H}_2\mathrm{O})=\SI{18}{\gram\per\mole} \) and
 \(M(\mathrm{O}_2)=\SI{32}{\gram\per\mole}\).
The mass density is assumed to be
\begin{equation}\label{massdensity}
\rho =\sum_{i=1}^\mathrm{I}\rho_i\mbox{ in }\Omega_\mathrm{f}. 
\end{equation}

Therefore, the overall balanced cell reactions are
\begin{description}
\item[H\(_2\)PEMFC] 
\(
2\mathrm{H}_2+ \mathrm{O}_2\rightarrow 2\mathrm{H}_2\mathrm{O}\), \(
E^0_\mathrm{cell}=\SI{1.23}{\volt}.
\)

\item[DMFC]
\(
2\mathrm{CH}_4\mathrm{O}+3\mathrm{O}_2\rightarrow 2\mathrm{CO}_2+4\mathrm{H}_2
\mathrm{O}.
\)
\end{description}
For the sake of simplicity,  we consider the number of species (cf. Table 1)
\[\mathrm{I} = \mathrm{I}_\mathrm{a} =\mathrm{I}_\mathrm{m} =\mathrm{I}_\mathrm{c} =2.\]
 \begin{table}[h]\label{table1}
\caption{The correspondence of each component to each region}
\center \begin{tabular}{|c|c|c|c|}
\hline  \(i\)&  \(\Omega_\mathrm{fuel}\cup\Omega_\mathrm{a}\) & \(\Omega_\mathrm{m}\)  &  \(\Omega_\mathrm{air}\cup\Omega_\mathrm{c}\)    
 \\
\hline 
1 & fuel & H\(_3\)O\(^+\) &   O\(_2\) \\ 
2 &H\(_2\)O &H\(_2\)O  &H\(_2\)O  \\ 
\hline 
\end{tabular} 
 \end{table}
 The water  is present in  fluid and vapor  states, and in both cases it can be modeled as
a Newtonian fluid (linearly viscous fluid).

\subsection{In the porous domain  \( \Omega_\mathrm{p}=\Omega_\mathrm{a}\cup\Omega_\mathrm{m}\cup\Omega_\mathrm{c}\)}

The governing equations,  after a volume averaging procedure \cite{whit}, are
\begin{align}
\nabla  \cdot \mathbf{u}_\mathrm{D} &=0 ; \label{momentump}\\
\nabla\cdot \mathbf{j}_i &=0; \label{speciesp}
 \\ \label{heatp}
\nabla\cdot \mathbf{q}  &=Q, \mbox{ a.e. in }\Omega_\mathrm{p},
\end{align}
for   \(i= \) according to Table 1 in Section \ref{table1}, that is I\(_\mathrm{p}=2\).
Here, it   is omitted the bracket \(\langle\cdot  \rangle\),  which usually represents  the volume averaged. Thus,
 the temperature \(\theta\) is the spatially averaged (over a representative elementary volume) microscopic quantity, 
and the Darcy velocity \(\mathbf{u}_\mathrm{D}\) [\si{\metre\per\second}] is the superficial average quantity.
 The volume averaged density  \(\rho\) 
 of the fluid is piecewise  constant, \(\rho_\mathrm{water}=\SI{970}{\kilogram\per\cubic\metre}\)
and \(\rho_\mathrm{air}=\SI{0.995}{\kilogram\per\cubic\metre}\),  due to \(\rho_\mathrm{air} =p_\mathrm{atm}M_\mathrm{air}/(R\theta_r)\), 
at the typical operating temperature of \(\theta_r = \SI{357.15}{\kelvin}\) (= \SI{84}{\celsius}),
\(p_\mathrm{atm} =\SI{101.325}{\kilo\pascal}\) and
\(M_\mathrm{air} =\SI{28.97}{\gram\per\mole}\).

The  Darcy velocity \(\mathbf{u}_\mathrm{D}\) obeys
\begin{equation}\label{darcy1}
\mu\mathbf{u}_\mathrm{D}=- K_g \nabla p ,
\end{equation}
where \(p\)  is the intrinsic average  pressure [\si{\pascal}],
 \(\mu=\mu(\theta)\) denotes the viscosity [\si{\pascal\second}]  and
 \(K_g\) represents the gas permeability [\si{\metre\squared}] that is given by the 
 Klinkenberg equation
 \begin{equation}\label{defKg}
 K_g=K_l\left(1+\frac{b}{p}\right),
 \end{equation}
 with \(b>0\) being a constant, \(b>0\) in \(\Omega_\mathrm{a}\cup\Omega_\mathrm{c}\) and  \(b=0\) in \(\Omega_\mathrm{m}\),
and \(K_l>0\) being the liquid permeability of the porous media  that only depends on the porosity \(\epsilon\)
and therefore it is constant.

The molar flux \(\mathbf{J}_i\) of the water \(i=\) H\(_2\)O obeys \eqref{defji},
 where the second term  means the electro-osmosis (\(j\not= i\)), 
 with  \(D_{ij}=n_\mathrm{d}\) representing the electro-osmostic drag coefficient \cite{cole}.
The proton flux \(\mathbf{J}_i\) of the ionic component \(i=\) H\(_3\)O\(^+\) obeys \eqref{defji},
where in the first term \(D_i=\kappa/(z_iF)\), with
 the proton ionic conductivity $\kappa$ being no constant in accordance with the membrane did not being fully
hydrated.

In the energy equation (\ref{heatp}),  the coefficient  \(c_\mathrm{v}>0\) denotes the specific heat capacity of the fluid at constant volume.
The Joule effect 
\begin{equation}\label{joule}
Q=\chi_{\Omega_\mathrm{a}\cup\Omega_\mathrm{c}} \sigma|\nabla\phi|^2
\end{equation}
takes into account that the effect of flow velocity is negligible when
compared to the electrical current that exists in \(\Omega_\mathrm{a}\cup\Omega_\mathrm{c}\).
 
 The electric current density \(\mathbf{j}\) verifies
\begin{equation}\label{electric}
\nabla\cdot\mathbf{j}=0 \quad\mbox{ a.e. in } \Omega_\mathrm{a}\cup\Omega_\mathrm{c} .
\end{equation}
Notice that there is no  electric current density in \(\Omega_\mathrm{m}\), \textit{i.e.} 
there is the ionic current density \( \mathbf{j}_\mathrm{m}\)  that verifies 
\(\mathbf{j}_\mathrm{m}= z_{\mathrm{H}^+} F \mathbf{J}_{\mathrm{H}^+}\), where
 the valence of species \(z_{\mathrm{H}^+}=1\).
Also, \(\sigma_m=\SI{ 8.3}{\siemens\per\metre}\) is known for the ionomer Nafion.

\subsection{On the outer boundary \(\partial\Omega\)}

The boundary of \(\Omega\)   is constituted by three pairwise disjoint open \((n-1)\)-dimensional sets, namely 
 \(\Gamma_\mathrm{in}\),  \(\Gamma_\mathrm{out}\) and \(\Gamma_\mathrm{w}\)
which represent the inlet, outlet and wall boundaries, respectively, 
\[
\partial\Omega= \Gamma_\mathrm{in}\cup \Gamma_\mathrm{out}\cup  \overline \Gamma_\mathrm{w}.
\]
The wall boundary has a subpart  \(\Gamma_\mathrm{cc}\subset \partial\Omega_\mathrm{p}\)
 that stands for the  current collector, meaning that the remaining  wall boundary is electrical current  insulated.
 The inlet and outlet sets are the union of two disjoint connected open  \((n-1)\)-dimensional sets, namely,
 \begin{align*}
 \Gamma_\mathrm{in} &= \Gamma_\mathrm{in,a}\cup  \Gamma_\mathrm{in,c} ; \\
 \Gamma_\mathrm{out} &= \Gamma_\mathrm{out,a}\cup  \Gamma_\mathrm{out,c},
\end{align*}
corresponding to the anodic and cathodic channels, \(\Omega_\mathrm{fuel}\) and \(\Omega_\mathrm{air}\). 

On the wall boundary \(\Gamma_\mathrm{w} \), 
the no outflow boundary conditions are considered to the velocity  and the species, 
\begin{equation}
\mathbf{u}\cdot\mathbf{n}=(\rho_i \mathbf{u} + \mathbf{j}_i) \cdot\mathbf{n}=0 
\quad (i=1,\cdots,\mathrm{I}).\label{noflow}
\end{equation}
Hereafter,  \(\mathbf{n}\) denotes the outward unit normal to \(\partial\Omega\).

On the inlet and outlet boundaries \(\Gamma_\mathrm{in}\cup\Gamma_\mathrm{out}\),
 the velocity, the partial densities and the temperature are specified.
Due to the characteristics of the  domain, the velocity is constantly specified on the \(y\) direction.
Since the general case for prescribed partial densities and temperature can be handled by subtracting  background
profile that fits the specified  functions, we assume homogeneous Dirichlet condition.
\begin{itemize}
\item for a.e. \((x,0,z)\in \Gamma_\mathrm{in}\):
\begin{align*}
\mathbf{u}(x,0,z) &= u_\mathrm{in}\mathbf{e}_y\equiv (0,u_\mathrm{in},0) ;\\
\rho_i(x,0,z) &= \theta(x,0,z) =0.
\end{align*}
\item for a.e. \((x,L,z)\in \Gamma_\mathrm{out}\):
\begin{align*}
\mathbf{u}(x,L,z) &= u_\mathrm{out}\mathbf{e}_y\equiv (0,u_\mathrm{out},0);\\
\rho_i(x,L,z) &= \theta(x,L,z) =0.
\end{align*}
\end{itemize}

On the current collector wall boundary \(\Gamma_\mathrm{cc}\),
 the electric potential is prescribed through the cell voltage
\(E_\mathrm{cell}=\phi|_{\Gamma_\mathrm{cc,c}} - \phi|_{\Gamma_\mathrm{cc,a}} \), that means
\begin{equation}\label{Ecell}
\phi=E_\mathrm{cell} \mbox{ on } \Gamma_\mathrm{cc,c}\quad\mbox{and}\quad \phi=0
\mbox{ on } \Gamma_\mathrm{cc,a}.
\end{equation}
On the remaining  wall boundary \(\Gamma_\mathrm{w}\setminus\Gamma_\mathrm{cc}\),
 the no outflow \( \mathbf{j} \cdot\mathbf{n}=0\) is considered.

Finally, the Newton law of cooling, which is
mathematically known as  the Robin-type boundary condition, is considered
\begin{equation}\label{newton}
 \mathbf{q}\cdot\mathbf{n} = h_c(\theta-\theta_e) \mbox{ on } \Gamma_\mathrm{w} ,
\end{equation}
where \(h_c\) denotes the conductive heat transfer coefficient, which may 
 depend both on the spatial variable and the temperature function \(\theta\),
 and  \(\theta_e\) denotes the external coolant stream temperature at the wall.

\subsection{On the fluid-porous  interface  \(\Gamma\)}
\label{porousfluidbd}

The unit outward  normal to the interface boundary \(\Gamma\) pointing from the fluid region to the porous
medium is \(\mathbf{e}_x\) on int\(\left(\partial\Omega_\mathrm{fuel}\cap \partial \Omega_\mathrm{a}\right)\)
and \( - \mathbf{e}_x\) on int\(\left(\partial\Omega_\mathrm{air}\cap \partial \Omega_\mathrm{c}\right)\).

We consider the continuity of mass flux, a  constant interface temperature,
 and the balance of normal Cauchy stress vectors (namely,   \(\sigma_{fN}+\sigma_{pN}=0\))
\begin{align}
\mathbf{u}\cdot\mathbf{e}_x &= \mathbf{u}_D\cdot\mathbf{e}_x;\label{cmf}\\
 \theta_f &=\theta_p; \label{ttfp} \\
(\tau \cdot \mathbf{e}_x)\cdot\mathbf{e}_x &= [p] := p_\mathrm{f}-p_\mathrm{p} ,\label{cns}
\end{align}
where \([\cdot]\) denotes the jump of a quantity across the interface in direction to the fluid medium.
 The condition (\ref{cmf}) guarantees that the exchange of fluid between the two domains is conservative. 

The heat transfer transmission  is  completed by  the continuous heat flux condition
\begin{equation}
\label{bcsp}
\mathbf{q} \cdot\mathbf{e}_x= -\mathbf{q}_\mathrm{p}\cdot\mathbf{e}_x .
  \end{equation}

Finally, we assume the fluid flow is almost parallel to the interface
 and the Darcy velocity is much smaller than the slip velocity. Thus,
 the Beavers--Joseph--Saffman (BJS)  interface boundary condition may be considered \cite{er}
\begin{equation}
(\tau \cdot\mathbf{n})\cdot \mathbf{e}_j =-\beta \mathbf{u} \cdot\mathbf{e}_j\qquad (j=y,z) \label{bj}
\end{equation}
where the coefficient \(\beta=\alpha_{BJ} K^{-1/2}>0\) denotes the Beavers--Joseph slip coefficient,
 with \(\alpha_{BJ}\) being dimensionless and characterizing the nature of the porous surface.

\subsection{On the  membrane interface \(\Gamma_\mathrm{CL}=\Gamma_\mathrm{a}\cup\Gamma_\mathrm{c}\)}
\label{interface}

In the sequel, we foccus on the H\(_2\)PEMFC. In both half cell reactions,  the number of electrons that participate in each half cell reaction \(n\) is equal to \(4\)
(see Subsections \ref{anodic} and \ref{cathodic}).

On \(\Gamma_\mathrm{a}=\partial\Omega_\mathrm{a}\cap\overline{\Omega}_\mathrm{m}\),
 it occurs the oxidation reaction of the fuel, that is,
 \[
 \mathbf{j}_1\cdot\mathbf{e}_x= -\frac{sM(\mathrm{H}_2)}{nF} j_{a} \quad \mbox{ a.e. on }\Gamma_\mathrm{a} ,
 \] 
with the anodic stoichiometry number \(s=2\).
 
On \(\Gamma_\mathrm{c}=\partial\Omega_\mathrm{c}\cap\overline{\Omega}_\mathrm{m}\),
 it occurs the oxygen reduction reaction, that is,
 \[ 
 \mathbf{j}_1\cdot\mathbf{e}_x= -\frac{sM(\mathrm{O}_2)}{nF} j_{c} \quad \mbox{ a.e. on }\Gamma_\mathrm{c} ,
 \]
 with  the cathodic stoichiometry number \(s=1\).

The reaction rates \(j_\ell\) [\si{\ampere\per\square\metre}] are given by the Butler--Volmer equation
\begin{align*}
j_\mathrm{a} &= j_{\mathrm{a},0} \left( \frac{c_\mathrm{fuel}}{c_{\mathrm{fuel},0}}\right)^{\nu}
\left( \exp\left[\frac{ F\eta_a}{R\theta_a}\right]-
 \exp\left[- \frac{ F\eta_a}{R\theta_a}\right]\right);\\
j_\mathrm{c} &= j_{\mathrm{c},0}  \frac{c_{\mathrm{O}_2}}{c_{\mathrm{O}_2,0}} 
\left( \exp\left[\frac{ F\eta_c}{R\theta_c}\right]-
 \exp\left[- \frac{ F\eta_c}{R\theta_c}\right]\right)
\end{align*}
for some  \(j_{\ell,0}>0\) only spatial dependent being such that \(j_{\mathrm{a},0}>j_{\mathrm{c},0}\) \cite{gott}. 
Here, it is considered the charge transfer coefficient equal to \(1/2\),  \(\theta_a\) and \(\theta_c\) are some reference temperatures, \(\nu=1/2\) for H\(_2\) fuel,
and \(\eta_\ell =\phi_\ell-\phi_m-\phi_r\) stands for the overpotential (\(\ell = a, c\)), for some reference potential \(\phi_r\).

Thus, the electric current may be modeled by the Butler--Volmer boundary condition
\begin{equation}\label{BVeq}
- \mathbf{j}\cdot\mathbf{e}_x=j_\ell\quad \mbox{ a.e. on }\Gamma_\ell  ,\quad ( \ell = \mathrm{a,c}) ,
\end{equation}
 Notice that the reaction rates are affected by the transport of species near the electrode, and may be represented as a current in terms of the limiting current 
\[j_\ell= j_{\ell,L} \left(1-\left( \frac{c}{c_0}\right)^{\nu_\ell} \right),\]
with \(\nu_\ell = \nu\) if \( \ell =a\),  and \(\nu= 1\) if \(\ell =c\).
Then, we may consider 
\begin{equation}\label{etapo}
j_\ell (\eta)= j_{\ell,L}  \frac{2 j_{\ell,0}  \sinh [ \eta /B_\ell]}{ j_{\ell,L} +2  j_{\ell,0}  \sinh[ \eta/B_\ell]} \quad\mbox{ for }\eta\geq 0,
\end{equation}
with \(B_\ell = R\theta_\ell/F\) being the Tafel slope at  \(\ell= a, c\). For a mathematical analysis, we assume that
\begin{equation}\label{etane}
j_\ell(\eta)=-j_\ell(-\eta)\quad\mbox{ if } \eta<0.
\end{equation} 

We emphasize that this assumption avoids the existence of infinitely many non-trivial solutions that happens for boundary value problem
under the Butler--Volmer boundary condition \cite{vogelius}. Also \(j_{\ell}\) representing the dual-pathway kinetic equation
based on the Tafel--Heyrovsky--Volmer mechanism  \cite{kher,jacsHOR} may be similarly treated.

\section{Variational formulation and main result}
\label{framework}

In the framework of Sobolev and Lebesgue functional spaces, for \(r>1\), we introduce the following spaces of test functions
\begin{align*}
\mathbf{V}(\Omega_f) =& \{ \mathbf{v}\in \mathbf{H}^1(\Omega_\mathrm{f}):\ 
 \mathbf{v}=\mathbf{0}\mbox{ on }\Gamma_\mathrm{in}\cup\Gamma_\mathrm{out}; \
 \mathbf{v}\cdot\mathbf{n}=0\mbox{ on }\Gamma_\mathrm{w} \};\\
V_r(\Omega_p) =& \{ v\in W^{1,r}(\Omega_\mathrm{p}):\ v=0\mbox{ on }\Gamma_\mathrm{cc} \};\\
V(\Omega) =& \{ v\in H(\Omega):\ v=0\mbox{ on }\Gamma_\mathrm{in}\cup\Gamma_\mathrm{out}\};\\
H(\Omega_p) =& \{v\in H^{1}(\Omega_\mathrm{p}):\  v_a :=v|_{\Omega_\mathrm{a}}, \  v_c :=v|_{\Omega_\mathrm{c}},
\  v_m :=v|_{\Omega_\mathrm{m}} \\
& v_a =v_m \mbox{ on }\Gamma_\mathrm{a}, \  v_c =v_m \mbox{ on }\Gamma_\mathrm{c}\} ;\\
H(\Omega) =& \{v\in H^{1}(\Omega):\ v_f:=v|_{\Omega_\mathrm{f}},\  v_p:=v|_{\Omega_\mathrm{p}},\
 v_f=v_p \mbox{ on }\Gamma\} ,
\end{align*}
with their usual norms. 
Considering that the Poincar\'e inequality occurs whenever the trace of the function vanishes on a part with positive measure of the boundary \(\partial\Omega\),
then the  Hilbert spaces, \( \mathbf{V}(\Omega_f)\), \( V(\Omega_p) \) and  \(V(\Omega) \),  are  endowed with the standard seminorms.

We denote \(V(\Omega_p)=V_2(\Omega_p)\), for the sake of simplicity.

Set the \((\bm{\rho},\theta)\)-dependent  \((\mathrm{I}+2)^2\)-matrix
\[ 
\mathsf{A}(\bm{\rho},\theta) =\left[\begin{array}{ccccc}
D_1(\theta) &\cdots& D_{ 1\mathrm{I}}(\theta)  & a_{1,\mathrm{I}+1}(\rho_1,\theta) &a_{1,\mathrm{I}+2}(\rho_1,\theta)\\
\vdots&\ddots&\vdots &\vdots&\vdots\\
D_{\mathrm{I}1}(\theta) &\cdots &D_\mathrm{I}(\theta) &a_{\mathrm{I},\mathrm{I}+1}(\rho_\mathrm{I},\theta) &a_{ \mathrm{I},\mathrm{I}+2}(\rho_\mathrm{I},\theta) \\
a_{\mathrm{I}+1,1}(\rho_1,\theta) &\cdots &a_{\mathrm{I}+1,\mathrm{I}}(\rho_\mathrm{I},\theta) & k(\theta)  &a_{\mathrm{I}+1,\mathrm{I}+2}(\bm{\rho},\theta) \\
a_{\mathrm{I}+2,1}(\rho_1,\theta) &\cdots &a_{\mathrm{I}+2,\mathrm{I}}(\rho_\mathrm{I},\theta)  &a_{\mathrm{I}+2,\mathrm{I}+1}(\bm{\rho},\theta) & \sigma(\bm{\rho},\theta)
\end{array}\right] ,
\] 
where the leading coefficients are kept denoted according to the Fick, Fourier, Ohm laws, for reader's convenience.

The fuel cell problem, which its strong formulation is  stated in Section \ref{spemfc}, is equivalent to the following variational formulation.
\begin{definition} \label{dwt}
We say that the function \((\mathbf{u},p,\bm{\rho},\theta,\phi)\) is a weak solution to the fuel cell problem,
  if it satisfies the following variational formulations to
  \begin{itemize} 
\item  the momentum conservation (Beavers--Joseph--Saffman/Stokes--Darcy problem)
\begin{align} \label{motionwbj}
\int_{\Omega_\mathrm{f}}\mu(\theta)D\mathbf{u}:D\mathbf{v}  \dif{x}+
\int_{\Omega_\mathrm{f}}\lambda(\theta)\nabla\cdot\mathbf{u}\nabla\cdot\mathbf{v}  \dif{x} 
+ \int_{\Omega_\mathrm{p}}  \frac{K_g(p)}{\mu(\theta)}\nabla p\cdot\nabla v\dif{x} \nonumber \\
+\int_{\Gamma} \beta(\theta)\mathbf{u}\cdot\mathbf{v} \dif{s}+\int_{\Gamma }  p\mathbf{v}\cdot\mathbf{n} \dif{s}
-\int_{\Gamma }  \mathbf{u}\cdot\mathbf{n}v \dif{s} 
=R_\mathrm{specific}\int_{\Omega_\mathrm{f}} \rho \theta \nabla\cdot\mathbf{v} \dif{x}, 
\end{align}
 holds for all \((\mathbf{v},v)\in  \mathbf{V}(\Omega_f)\times H(\Omega_p)\).

\item the species conservation
\begin{align}  \label{wvfi} 
\int_{\Omega_\mathrm{f}}  \rho_i \mathbf{u}\cdot\nabla  v\dif{x} + 
\int_{\Omega} D_i(\theta )\nabla \rho_i \cdot\nabla v\dif{x} 
+  \sum_{\genfrac{}{}{0pt}{2}{j=1}{j\not= {i}}}^\mathrm{I}  \int_{\Omega_\mathrm{m}} D_{ij}( \theta )\nabla \rho_j \cdot\nabla v\dif{x}  \nonumber \\
+ \int_{\Omega} a_{i,\mathrm{I} +1 }(\rho_i,\theta )\nabla \theta\cdot\nabla v \dif{x} 
 +\int_{\Omega_\mathrm{p}}  a_{i, \mathrm{I}+2 } ( \rho_i,\theta )\nabla\phi \cdot\nabla v\dif{x} 
= 0, 
\end{align}
 holds for all  \(v\in V(\Omega)\)  and \(i=1,2,\cdots, \mathrm{I}\).
 
\item the energy conservation
\begin{align}\label{wvfi1} 
\int_{\Omega}  k(\theta)\nabla\theta \cdot\nabla v\dif{x} 
+ \int_{\Gamma_\mathrm{w}} h_c(\theta)\theta v \dif{s}  \nonumber \\
+\sum_{j=1}^{\mathrm{I} } 
\int_{\Omega} a_{\mathrm{I}+1 ,j}( \rho_j,\theta )\nabla \rho_j \cdot\nabla v\dif{x} +
\int_{\Omega_\mathrm{p}} a_{ \mathrm{I}+1 , \mathrm{I}+2}( \bm{\rho},\theta )\nabla\phi \cdot\nabla v\dif{x} \nonumber \\
 = \int_{\Gamma_\mathrm{w}}  h_c(\theta)\theta_e  v \dif{s} 
+\int_{\Omega_\mathrm{a}\cup\Omega_\mathrm{c}} \sigma( \bm{\rho},\theta ) |\nabla\phi|^2v\dif{x} ,
\end{align}
 holds for all  \(v\in  V(\Omega)\).
 
\item  the electricity conservation 
\begin{align}\label{wvfphi}
\int_{\Omega_\mathrm{p}}\sigma ( \bm{\rho},\theta )\nabla\phi\cdot\nabla w\dif{x}
 + \sum_{j=1}^{\mathrm{I}} \int_{\Omega_\mathrm{m}}  a_{\mathrm{I}+2,j}( \rho_j,\theta )\nabla \rho_j \cdot \nabla w \dif{x}  \nonumber \\ 
+  \int_{\Omega_\mathrm{m}} a_{\mathrm{I}+2, \mathrm{I}+1}( \bm{\rho},\theta )\nabla \theta \cdot \nabla w \dif{x}
+\int_{\Gamma_\mathrm{a}} j_a([\phi]) [w]\dif{s}\nonumber\\
=  \int_{\Gamma_\mathrm{c}}j_c(\phi_c- \phi_m-E_\mathrm{cell}) [w]\dif{s} , 
\end{align}
holds for all  \(w\in V(\Omega_p) \). 
\item and \(\rho\) obeying \eqref{massdensity}.
 \end{itemize}
\end{definition}
Hereafter, we use the notation \(\dif{s}\) for the surface element in
the integrals on the boundary  as well as any subpart of the  boundary \(\partial\Omega\).
Although in Section \ref{porousfluidbd}, the notation \([\cdot]\) was used for the jump of a quantity across  in the interface in the direction to the fluid media,
for the sake of clearness, 
in \eqref{wvfphi} it means
\([w]=w_\ell-w_m\), where the subscripts denote the restriction to \(\Omega_\ell\), \(\ell=\) a, c, or \(\Omega_\mathrm{m}\).

The equivalence between the strong and variational formulations use standard arguments \cite{vazquez}.
Indeed, the variational formulation \eqref{motionwbj} follows from the strong formulations \eqref{momentum}, \eqref{momentump} 
and \eqref{darcy1},  via the Green formula,
\begin{align*}
- \int_{\Omega_f} \tau :D\mathbf{v} \dif{x}
+ \langle \tau_T+\tau_N\mathbf{n},\mathbf{v} \rangle_\Gamma 
=&\int_{\Omega_f} p\nabla\cdot\mathbf{v} \dif{x} 
-\langle  p_\mathrm{f},\mathbf{v}\cdot\mathbf{n} \rangle_\Gamma
,\quad\forall \mathbf{v}\in \mathbf{V}(\Omega_f); \\ 
\int_{\Omega_\mathrm{p}}\frac{K_g(p)}{\mu (\theta)}\nabla p\cdot\nabla v\dif{x} =&
\int_{\Gamma}  \mathbf{u}_\mathrm{D}\cdot\mathbf{n} v \dif{s},
\quad \forall v\in H(\Omega_p), 
\end{align*}
by considering \eqref{cmf}, \eqref{cns} and \eqref{bj}.

The variational formulations \eqref{wvfi}, \eqref{wvfi1} and \eqref{wvfphi} follow from the respective strong formulations, namely,
from \eqref{species}, \eqref{speciesp} with boundary conditions \eqref{noflow}, \eqref{BVeq}-\eqref{etane};
from \eqref{heateqs},  \eqref{heatp}, \eqref{joule} with boundary conditions \eqref{newton}, \eqref{ttfp} and \eqref{bcsp};
and from \eqref{electric}  with boundary conditions \eqref{noflow}-\eqref{Ecell}.

\begin{remark}\label{meaningfull}
All terms are meaningful in the integral identities \eqref{motionwbj}-\eqref{wvfphi}. 
In particular, the Joule effect \(Q=\sigma |\nabla\phi|^2\)belonging to \( L^t(\Omega_\mathrm{a}\cup\Omega_\mathrm{c})\) is meaningful
for any \(t>1\) if \(n=2\) or for any \(t\geq 2n/(n+2)\) if \(n>2\).
\end{remark}

The set of hypothesis is as follows.
\begin{description}
\item[(H1)] The viscosities \(\mu\) and \(\lambda\) are  assumed to be Carath\'eodory functions from \(\Omega_\mathrm{f}\times\mathbb{R}\) into \(\mathbb{R}\) such that
\begin{align}\label{mu}
\exists \mu_\# ,  \mu^\# >0: &\  \mu_\# \leq\ \mu(x,e) \leq \mu^\# ; \\
\exists \lambda^\# >0: &\ |\lambda(x,e)| \leq  \lambda^\# ,
\label{nu3}
\end{align}
for a.e. \(x\in\Omega_\mathrm{f}\) and  for all \( e\in\mathbb{R}\).While \(K_g\) 
is assumed to be Carath\'eodory function from \(\Omega_\mathrm{p}\times\mathbb{R}\) into \(\mathbb{R}\) such that
\begin{equation}\label{defkl}
\exists K_l,b>0 : \ K_l\leq K_g(x,e)\leq K_l+b,
\end{equation}
for a.e. \(x\in\Omega_\mathrm{p}\) and  for all \( e\in\mathbb{R}\).

\item[(H2)]  
The matrix of coefficients \(\mathsf{A}\) has its components being  Carath\'eodory  functions from  \(\Omega\times\mathbb{R}^{\mathrm{I}+1}\) to \(\mathbb{R}\),
except the leading coefficients \(D_i\), \(k\)  that are Carath\'eodory  functions from  \(\Omega\times\mathbb{R}\) to \(\mathbb{R}\).
While  the leading coefficients \(D_i\), \(k\) and  \(\sigma\)   satisfy 
\begin{align}
\exists  D_i^\#, D_{i,\#},D_{i,p} >0:\ & D_{i,_\#} \leq  D_i(x,e)\leq D_i^\#,\quad \mbox{ for a.e. }  x\in \Omega_\mathrm{f} ; \label{Dif}\\
&  D_{i,_p} \leq  D_i(x,e)\leq D_i^\#,\quad \mbox{ for a.e. }  x\in \Omega_\mathrm{p} ; \label{Dip}\\
\exists k^\#, k_\# >0:\ & k_\#\leq k(x,e)\leq k^\#, \quad\mbox{ for a.e. } x\in\Omega; \label{km}\\
\exists\sigma^\#, \sigma_\#,\sigma_m >0:\ &\sigma_\#\leq \sigma(x,\mathbf{e})\leq \sigma^\#, \quad \mbox{ for a.e. }  x\in \Omega_\mathrm{a}\cup  \Omega_\mathrm{c};\label{smp}\\ 
&\sigma_m \leq \sigma(x,\mathbf{e})\leq \sigma^\#, \quad \mbox{ for a.e. }  x\in \Omega_\mathrm{m}\label{smm}
\end{align}
 for all  \(e\in\mathbb{R}\) and \(\mathbf{e}\in\mathbb{R}^{\mathrm{I}+1}\),   the remaining coefficients satisfy
\begin{align} 
\label{aI2j}\exists a_{\mathrm{I}+2,j}^\#>0:\quad& |a_{\mathrm{I}+2,j}(\cdot,\mathbf{e})|\leq a_{\mathrm{I}+2,j}^\#,\ \mbox{a.e. in } \Omega_\mathrm{m};\\
\label{aij}\exists a_{i,j}^\#>0:\quad& |a_{i,j}(\cdot,\mathbf{e})|\leq a_{i,j}^\#,\quad \mbox{a.e. in } \Omega,\ \forall\mathbf{e}\in\mathbb{R}^{\mathrm{I}+1} ,
\end{align}
for all \(i\in\{1,\cdots,\mathrm{I}+1\}\) and \(j\in\{1,\cdots,\mathrm{I}+2\}\) such that \(i\not=j\).
 Moreover,  we assume
\begin{align}\label{aa1}
&a_{i,\#}=\min\{D_{i,\#}, D_{i,p} \} -2^{\mathrm{I}}\frac{ (a^\#_{\mathrm{I}+1,i})^2}{k_\#}-2^{\mathrm{I}}\frac{ (a^\#_{\mathrm{I}+2,i})^2}{\sigma_m}
- 2^{\mathrm{I}+1}\sum_{\genfrac{}{}{0pt}{2}{j=1}{j\not= {i}}}^{\mathrm{I}} \frac{(a^\#_{j,i})^2}{D_{j,\#} } >0; \\
&a_{\mathrm{I}+1,\#} =k_\#-4\sum_{j=1}^{\mathrm{I}} \frac{(a^\#_{j,\mathrm{I}+1})^2}{D_{j,\#} } -2\frac{ (a^\#_{\mathrm{I}+2,\mathrm{I}+1})^2}{\sigma_m}>0; \\
&a_{\mathrm{I}+2,\#} =\min\{\sigma_\#,\sigma_m\}-2\frac{ (a^\#_{\mathrm{I}+1,\mathrm{I}+2})^2}{k_\#}-2\sum_{j=1}^{\mathrm{I}} \frac{(a^\#_{j,\mathrm{I}+2})^2}{D_{j,\#} }
>0,\label{aa3}
\end{align}
for each \(i= 1,\cdots,\mathrm{I}\). We  observe that these assumptions are required for the Legendre--Hadamard ellipticity condition.

\item[(H3)] The boundary coefficient \(\beta\)  is assumed to be a Carath\'eodory function from \(\Gamma\times\mathbb{R}\) into \(\mathbb{R}\). 
 Moreover, there exist \(\beta_\#,\beta^\#>0\) such that 
\begin{equation}\label{gamm}
\beta_\#\leq \beta(\cdot,e) \leq \beta^\# ,
\end{equation}
 a.e. in \(\Gamma\), and  for all \(e\in\mathbb{R}\).
 
\item[(H4)] The boundary coefficient \(h_c\)  is assumed to be a Carath\'eodory function from \(\Gamma_\mathrm{w}\times\mathbb{R}\) into \(\mathbb{R}\). 
 Moreover, there exist \(h_\#,h^\#>0\) such that 
\begin{equation}\label{hmm}
h_\#\leq h_c(\cdot,e) \leq h^\# ,
\end{equation}
 a.e. in \(\Gamma_\mathrm{w}\), and  for all \(e\in\mathbb{R}\).
 
\item[(H5)] The boundary functions \(j_\ell\), \(\ell=\) a, c, are assumed to be the odd continuous functions from \(\mathbb{R}\) into \(\mathbb{R}\), 
defined in \eqref{etapo}-\eqref{etane}.

\item[(H6)] There exists \( u_0\in H^{1}(\Omega_\mathrm{f})\) such that \(u_0=u_\mathrm{in}\) on \(\Gamma_\mathrm{in} \) and 
\(u_0=u_\mathrm{out}\) on \(\Gamma_\mathrm{out} \). Indeed, due to the characteristics of the problem, \(u_0\) has explicit expression
\[
u_0(x,y,z)= u_\mathrm{in} +(u_\mathrm{out}-u_\mathrm{in})y/L.
\]

\end{description}

\begin{remark}\label{choice}
The choice of \eqref{aa1}-\eqref{aa3} depends on the application of  the relation 
\((a_1+\cdots+a_N)^2\leq 2^{N-1}( a_1^2+a_2^2)+2^{N-2}a_3^2+\cdots +2a_N^2\), \(N>2\), in the inequality \eqref{cotatriple}.
\end{remark}

Using the fixed point argument, we establish the following result under the smallness on the data.
\begin{theorem}\label{tmain}
Let \(\Omega\) be a bounded multiregion domain   of \(\mathbb{R}^n\), \(n=2,3\).
Under the assumptions (H1)-(H6),
the fuel cell problem admits, at least, one solution according to Definition \ref{dwt} such that
\begin{itemize}
\item the velocity \(\mathbf{u}\in \mathbf{u}_0+\mathbf{V}(\Omega_f)\), with \(\mathbf{u}_0=u_0\mathbf{e}_y\);
\item the  pressure \(p\in H(\Omega_p)\);
\item the partial densities \( \bm{\rho} \in [V(\Omega)]^\mathrm{I} \);
\item the temperature \(\theta\in V(\Omega)\);
\item the potential \(\phi \in  E_{cell}\chi_{\Omega_\mathrm{c}}+  V_r(\Omega_p) \), for \(r>2\),
\end{itemize}
if provided by one of the smallness conditions \eqref{small1} or \eqref{small2}.
\end{theorem}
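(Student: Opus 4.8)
\medskip
\noindent\textbf{Proof strategy.}
The plan is to produce the quintuple $(\mathbf{u},p,\bm\rho,\theta,\phi)$ as a fixed point of an operator assembled from the two auxiliary problems of Section~\ref{tdt}, applied to an \emph{approximated} problem and then passing to the limit. First I would use (H6) to write $\mathbf{u}=\mathbf{u}_0+\tilde{\mathbf{u}}$ with $\tilde{\mathbf{u}}\in\mathbf{V}(\Omega_f)$, and $\phi=E_{\mathrm{cell}}\chi_{\Omega_\mathrm{c}}+\tilde\phi$ with $\tilde\phi\in V_r(\Omega_p)$; the Dirichlet data for $\bm\rho$ and $\theta$ being homogeneous, no lifting is needed there. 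Since a priori the dissipation density in \eqref{wvfi1} is only integrable, I would regularize the Joule source, replacing $\sigma(\bm\rho,\theta)|\nabla\phi|^2$ by the bounded truncation $\sigma(\bm\rho,\theta)|\nabla\phi|^2/(1+\varepsilon|\nabla\phi|^2)$, solve the $\varepsilon$-problem, and remove $\varepsilon$ at the end. Then I would define $\mathcal{T}$ on a closed ball of $[L^q(\Omega)]^{\mathrm I}\times L^q(\Omega)\times L^q(\Omega_p)$, with $q$ strictly subcritical (so the relevant Sobolev embeddings are compact for $n=2,3$): given $(\bar{\bm\rho},\bar\theta,\bar\phi)$, (a) solve the Beavers--Joseph--Saffman/Stokes--Darcy problem \eqref{motionwbj} with viscosities and permeability frozen at $\bar\theta$ and right-hand side $R_{\mathrm{specific}}\bar\rho\,\bar\theta$ built from $\bar{\bm\rho}$ via \eqref{massdensity}, getting $(\tilde{\mathbf{u}},p)$ from the auxiliary velocity--pressure result of Subsection~\ref{auxup} (its nonlinearity in $p$ through $K_g$ being handled there by monotonicity, using \eqref{defkl}, \eqref{mu}); (b) with this $\mathbf{u}$ and the truncated Joule term, solve the coupled elliptic triple system \eqref{wvfi}--\eqref{wvfphi} for $(\bm\rho,\theta,\phi)$ via the auxiliary result of Subsection~\ref{auxdi}, whose coercivity is exactly the Legendre--Hadamard condition encoded in \eqref{aa1}--\eqref{aa3} (cf.\ \eqref{cotatriple} and Remark~\ref{choice}); set $\mathcal{T}(\bar{\bm\rho},\bar\theta,\bar\phi)=(\bm\rho,\theta,\phi)$.

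The self-mapping property is where the data must be small. Testing \eqref{motionwbj} with $(\tilde{\mathbf{u}},p)$ and the triple system with $(\bm\rho,\theta,\tilde\phi)$, and invoking Korn, Poincar\'e and trace inequalities together with (H1)--(H5) and the positive thresholds $a_{i,\#}$, $a_{\mathrm I+1,\#}$, $a_{\mathrm I+2,\#}$, yields an estimate of the schematic form $\|(\tilde{\mathbf{u}},p,\bm\rho,\theta,\tilde\phi)\|\le C_0+C_1\varrho+C_2\varrho^2$, where $\varrho$ is the radius of the ball and $C_0,C_1$ collect the data $u_{\mathrm{in}},u_{\mathrm{out}},E_{\mathrm{cell}},\theta_e,j_{\ell,0},\dots$; the quadratic term originates from the (truncated) Joule source and from the product $\rho\theta$ in the momentum balance. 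The smallness conditions \eqref{small1} or \eqref{small2} are precisely what makes the scalar map $\varrho\mapsto C_0+C_1\varrho+C_2\varrho^2$ possess a fixed radius $\varrho_\ast$ for which $\mathcal{T}$ sends the corresponding ball into itself; these estimates are uniform in $\varepsilon$.

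Continuity of $\mathcal{T}$ in the $L^q$-topology follows from dominated convergence for the Nemytskii operators built on the Carath\'eodory coefficients, which are uniformly bounded by \eqref{mu}--\eqref{smm}, \eqref{aI2j}, \eqref{aij}, \eqref{gamm}, \eqref{hmm}, together with uniqueness in the auxiliary problems; compactness follows because $\mathcal{T}$ factors through bounded sets of $\mathbf{H}^1(\Omega_f)\times H(\Omega_p)\times[H^1(\Omega)]^{\mathrm I}\times H^1(\Omega)\times W^{1,r}(\Omega_p)$, which embed compactly into the $L^q$ spaces for $n=2,3$ (Rellich--Kondrachov), and the interface traces are compact into $L^2$. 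The Tychonoff fixed point theorem then yields a solution $(\mathbf{u}_\varepsilon,p_\varepsilon,\bm\rho_\varepsilon,\theta_\varepsilon,\phi_\varepsilon)$ of the $\varepsilon$-regularized weak problem, satisfying the $\varepsilon$-uniform bounds of the previous paragraph.

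To pass to the limit $\varepsilon\to0$ I would combine these bounds with a Meyers--Gr\"oger higher-integrability estimate for \eqref{wvfphi} on the Lipschitz subdomains where the equation is source-free, namely $\Omega_\mathrm{a}\cup\Omega_\mathrm{c}$ (the Joule term lives only there by \eqref{joule}): uniform ellipticity \eqref{smp} and bounded Butler--Volmer/Dirichlet data \eqref{BVeq}--\eqref{Ecell} give $\nabla\phi_\varepsilon$ bounded in $[L^r(\Omega_\mathrm{a}\cup\Omega_\mathrm{c})]$ for some $r>2$ independent of $\varepsilon$, so $|\nabla\phi_\varepsilon|^2$ is bounded in $L^{r/2}$ with $r/2>1$ as anticipated in Remark~\ref{meaningfull}. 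Extracting weak limits in the energy spaces, strong $L^q$ and interface limits by compactness, and the strong $L^2$ convergence of $\nabla\phi_\varepsilon$ (obtained by testing the difference of the $\phi$-equations and using the convergence of the coefficients and the boundary data), one identifies the limit of the truncated Joule term with $\sigma(\bm\rho,\theta)|\nabla\phi|^2$ in $L^1$, passes to the limit in every integral identity \eqref{motionwbj}--\eqref{wvfphi}, and recovers \eqref{massdensity}; this produces the solution with the stated regularity. \emph{The two genuinely delicate points are} the closure of the a priori estimate in the self-mapping step --- the quadratic dependence on $\nabla\phi$ in the energy equation and the $\rho\theta$ term in the momentum balance preclude an unconditional bound, which together with the ellipticity thresholds \eqref{aa1}--\eqref{aa3} is exactly what forces \eqref{small1}/\eqref{small2} --- and the removal of the Joule regularization, which hinges on the uniform Meyers estimate and the strong convergence of $\nabla\phi_\varepsilon$.
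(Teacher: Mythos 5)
Your overall architecture (auxiliary Stokes--Darcy solve, auxiliary triple solve, Tychonoff, smallness to close the self-map, Gr\"oger--Rehberg/Meyers higher integrability and strong $L^2$ convergence of $\nabla\phi$ to handle the Joule term) matches the paper's, but two structural points in your fixed-point setup have genuine gaps. First, the permeability: you claim the nonlinearity $K_g(p)$ is ``handled by monotonicity'' inside the auxiliary velocity--pressure problem. It is not: the operator $p\mapsto-\nabla\cdot\bigl(K_g(\cdot,p)\nabla p/\mu\bigr)$ with a merely bounded Carath\'eodory coefficient \eqref{defkl} is not monotone (only pseudomonotone), and uniqueness of the resulting quasilinear solve --- which you need for $\mathcal{T}$ to be single-valued --- is not available without extra hypotheses. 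The paper avoids this by making the pressure $\pi$ one of the fixed-point variables and freezing $K_g(\pi)$, so that Proposition \ref{proppxi} is a linear Lax--Milgram problem with a genuinely unique solution; your ball in $[L^q(\Omega)]^{\mathrm I}\times L^q(\Omega)\times L^q(\Omega_p)$ omits the pressure and therefore cannot use that device.

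Second, and more seriously, your iteration does not say where the $\nabla\phi$ in the (truncated) Joule source comes from, and neither reading works with your choice of ball. If it is built from the input $\bar\phi$, the map is not even well defined, let alone continuous, on an $L^q(\Omega_p)$-ball: $L^q$ convergence of $\bar\phi_m$ gives no control of $\nabla\bar\phi_m$, truncated or not. If it is built from the output $\phi$ solved simultaneously in step (b), then the triple system is no longer the monotone, coercive system of Proposition \ref{proptt} (the quadratic gradient source destroys the Browder--Minty structure), so the auxiliary existence/uniqueness result you invoke does not apply and you would need a further inner fixed point that you have not supplied. The paper's resolution --- which is the missing idea --- is to carry the dissipation $\Phi=|\nabla\phi|^2$ as a separate fixed-point component in $L^t(\Omega_\mathrm{a}\cup\Omega_\mathrm{c})$, $t=1+\varepsilon/2$, with the Gr\"oger--Rehberg bound \eqref{cotajoule} furnishing the invariant radius $R_3$ and Propositions \ref{aeconv}--\ref{regular} furnishing the continuity of that component; with this in place the $\varepsilon$-truncation you introduce is an unnecessary detour rather than a fix, since the regularized problems would face exactly the same well-definedness issue at each fixed $\varepsilon$. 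Your schematic self-map estimate is also only loosely aligned with the paper's mechanism: the smallness conditions \eqref{small1}--\eqref{small2} arise because the advective term $S^*\|\mathbf{w}\|_{q,\Omega_\mathrm{f}}$, controlled quadratically in $R_2$ through \eqref{cotaup}, erodes the coercivity constants \eqref{aa1}--\eqref{aa3} in \eqref{cotattphi}, not from a free-standing quadratic inequality $C_0+C_1\varrho+C_2\varrho^2$; this part is repairable, but the two points above require the paper's enlarged fixed-point space to be closed.
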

The existence of the weak solution to the fuel cell problem relies on the fixed point argument
\begin{align}\label{fpa}
(\pi,  \bm{\varrho} , \xi, \varphi, \Phi) & \in E:= H(\Omega_p)\times[H^1_{}(\Omega)]^{\mathrm{I}+1}  \times V(\Omega_p) 
\times L^t(\Omega_\mathrm{a}\cup\Omega_\mathrm{c})\nonumber\\
&\mapsto   (\mathbf{U},p) \in \mathbf{V}(\Omega_f)\times ( H(\Omega_p)\setminus\mathbb{R}) \nonumber \\
& \mapsto (\bm{\rho}, \theta,\phi_{cc}) \in   [V(\Omega)]^{\mathrm{I}+1} \times V(\Omega_p) \nonumber\\
&\mapsto (p, \bm{\rho},\theta,\phi_{cc}, |\nabla\phi|_{\Omega_\mathrm{a}\cup\Omega_\mathrm{c}}|^2)
\end{align}
where
\begin{itemize}
\item \((\mathbf{U},p) =(\mathbf{U} ,p) (\pi, \bm{\varrho},\xi)  \) stands for the auxiliary velocity-pressure pair given at Section \ref{auxup}; 
\item \( (\rho_1,\cdots,\rho_\mathrm{I}, \theta, \phi_{cc}) = (\bm{\rho}, \theta, \phi  )(\mathbf{w}, \bm{\varrho},\xi,\varphi, \Phi) \) 
stands for the auxiliary partial densities, temperature and potential
given at Section \ref{auxdi}, for \(t\geq 2n/(n+2)\) if \(n>2\) or  \(t>1\) if \(n=2\), with
  \(\mathbf{w}=\mathbf{u}(\pi, \bm{\varrho},\xi)\) being  the auxiliary velocity field given at Section \ref{auxup};

\item  \(\phi=\phi_{cc} + E_{cell}\chi_{\Omega_\mathrm{c}} \), 
with \(\chi_{\Omega_\mathrm{c}}\) denoting the characteristic function.
\end{itemize}

\section{Auxiliary results}
\label{tdt}

In this section, 
although our result is only valid for \(n=2,3\), we keep the space dimension \(n\) as general whenever possible.
Thus, the reader is able to be aware where the dimension is an obstacle and may reflect on it. 

We begin by naming some known constants (see, for instance, \cite{rossi}) that are used in this work.
\begin{definition}
We call by 
\begin{itemize}
\item \(S^*\) the continuity constant of the Sobolev embedding \(H^1(\Omega)\hookrightarrow L^{2^*}(\Omega)\), 
 i.e.  it obeys the Sobolev inequality
\begin{equation}\label{sobolev}
\|v\|_{2^*,\Omega} \leq S^* \|v\|_{1,2,\Omega},  \quad \forall v\in H^1(\Omega),
\end{equation}
with \(2^*=2n/(n-2)\) being the critical Sobolev exponent if \(n>2\).
If \(n=2\),  the Sobolev inequality holds for any  \(1\leq 2^*\leq \infty\). For the sake of simplicity, we also denote by \(2^*\)  any arbitrary real number
greater than one, if \(n=2\).
\item \(S_*\) the continuity constant of the trace embedding \(H^1(\Omega)\hookrightarrow L^{2_*}(\partial\Omega)\), 
i.e. it obeys the trace inequality
\begin{equation}\label{trace}
\|v\|_{2_*,\partial\Omega} \leq S_* \|v\|_{1,2,\Omega},  \quad \forall v\in H^1(\Omega),
\end{equation}
with \(2_*=2(n-1)/(n-2)\) being the critical trace exponent  if \(n>2\).
If \(n=2\),  we denote by \(2_*\) an arbitrary real number greater than one.
\end{itemize}
\end{definition}

\begin{remark}\label{rsob}
The Rellich--Kondrachov compact  embeddings \(W^{1,p}(\Omega)\hookrightarrow\hookrightarrow L^{p^*}(\Omega)\) and
 \(W^{1,p}(\Omega)\hookrightarrow\hookrightarrow L^{p_*}(\partial\Omega)\)
stand for any exponent between \(1\) and  the critical Sobolev exponent \(p^*\) and the critical trace exponent \(p_*\), respectively.
\end{remark}

The Poincar\'e constant  \(C_{\Omega}\) can have different forms,  \textit{i.e.}   it obeys one of the Poincar\'e inequalities
\begin{align}\label{poincarea}
\|v-\alpha\|_{2,\Omega} \leq C_{\Omega} \|\nabla v\|_{2,\Omega},& \quad \forall \alpha\in \mathbb{R},  v\in H^1(\Omega) ;\\
\|v\|_{2,\Omega} \leq C_{\Omega} \|\nabla v\|_{2,\Omega},  &\quad \forall v\in V(\Omega). \label{poincareb}
\end{align}

We recall that the following Korn inequality, where the constant is not explicitly determined because the proof relies on the
contradiction argument, is not useful for establishing quantitative estimates.
\begin{lemma}[Korn inequality]\label{korn}
Let \(\Omega\subset \mathbb{R}^n\), \(n\geq 2\), be a bounded Lipschitz domain, and let \(1<p<\infty\).
Then, there exists a constant \(C>0\) such that
\[
\|\nabla\mathbf{v}\|_{p,\Omega}\leq C\|D\mathbf{v}-\frac{1}{n}(\nabla\cdot\mathbf{v})\mathsf{I}\,\|_{p,\Omega}
\]
for all \(\mathbf{v}\in \mathbf{W}^{1,p}(\Omega)\).
\end{lemma}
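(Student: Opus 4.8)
The plan is to establish this \emph{trace-free} (deviatoric) Korn inequality in two stages: first a Korn inequality of the second kind, carrying an extra lower-order term $\|\mathbf{v}\|_{p,\Omega}$ and obtained from an algebraic identity together with the Ne\v{c}as--Lions lemma; then the homogeneous estimate, obtained by removing that term through a compactness--contradiction argument. It is this second stage that renders the constant non-constructive, which is exactly the drawback the surrounding text anticipates.

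For the first stage, abbreviate $\mathsf{E}:=D\mathbf{v}-\tfrac1n(\nabla\cdot\mathbf{v})\mathsf{I}$. Starting from the Saint-Venant-type identity $\partial_k\partial_\ell v_j=\partial_k(D\mathbf{v})_{\ell j}+\partial_\ell(D\mathbf{v})_{kj}-\partial_j(D\mathbf{v})_{k\ell}$, inserting $D\mathbf{v}=\mathsf{E}+\tfrac1n(\nabla\cdot\mathbf{v})\mathsf{I}$ and contracting indices yields
\[
\Delta v_j=2\,(\mathrm{div}\,\mathsf{E})_j-\frac{n-2}{n}\,\partial_j(\nabla\cdot\mathbf{v}),\qquad
\Delta(\nabla\cdot\mathbf{v})=\frac{n}{n-1}\,\partial_j\partial_k\mathsf{E}_{jk}.
\]
The second relation exhibits $\nabla(\nabla\cdot\mathbf{v})$ as a homogeneous first-order elliptic operator applied to $\mathsf{E}$, so (localizing on the Lipschitz domain) $\|\nabla(\nabla\cdot\mathbf{v})\|_{-1,p,\Omega}\le C\|\mathsf{E}\|_{p,\Omega}$; carrying this back into the identity for $\partial_k\partial_\ell v_j$ shows that $\nabla(\partial_k v_j)$ is likewise bounded in $W^{-1,p}(\Omega)$ by $C\|\mathsf{E}\|_{p,\Omega}$, while trivially $\|\partial_k v_j\|_{-1,p,\Omega}\le C\|\mathbf{v}\|_{p,\Omega}$. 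The Ne\v{c}as--Lions lemma --- any distribution $f$ with $f,\nabla f\in W^{-1,p}(\Omega)$ lies in $L^p(\Omega)$ with $\|f\|_{p,\Omega}\le C(\|f\|_{-1,p,\Omega}+\|\nabla f\|_{-1,p,\Omega})$ --- applied to each $f=\partial_k v_j$ then gives the second-kind inequality $\|\nabla\mathbf{v}\|_{p,\Omega}\le C(\|\mathsf{E}\|_{p,\Omega}+\|\mathbf{v}\|_{p,\Omega})$ for all $\mathbf{v}\in\mathbf{W}^{1,p}(\Omega)$.

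For the second stage I would argue by contradiction. If the claimed inequality failed there would be $(\mathbf{v}_k)\subset\mathbf{W}^{1,p}(\Omega)$ with $\|\nabla\mathbf{v}_k\|_{p,\Omega}=1$ and $\|D\mathbf{v}_k-\tfrac1n(\nabla\cdot\mathbf{v}_k)\mathsf{I}\|_{p,\Omega}\to0$. Subtracting mean values (Poincar\'e--Wirtinger) leaves both quantities unchanged and makes $(\mathbf{v}_k)$ bounded in $\mathbf{W}^{1,p}(\Omega)$; by the Rellich--Kondrachov compact embedding a subsequence converges in $\mathbf{L}^p(\Omega)$, and applying the second-kind inequality to the differences $\mathbf{v}_k-\mathbf{v}_m$ upgrades this to $\mathbf{v}_k\to\mathbf{v}$ in $\mathbf{W}^{1,p}(\Omega)$, whence $\|\nabla\mathbf{v}\|_{p,\Omega}=1$ while $D\mathbf{v}-\tfrac1n(\nabla\cdot\mathbf{v})\mathsf{I}=0$.

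The main obstacle --- and the precise reason a compactness argument is needed at all --- is that this limit need not vanish: the condition $D\mathbf{v}=\tfrac1n(\nabla\cdot\mathbf{v})\mathsf{I}$ says that $\mathbf{v}$ is a conformal Killing field, the analogue of the rigid-motion kernel in the classical Korn inequality. For $n\ge3$ Liouville's theorem identifies this kernel $\mathcal{N}$ with a \emph{finite-dimensional} space (translations, rotations, dilations, inversions), so one repeats the argument imposing in addition $\mathbf{v}_k\perp\mathcal{N}$ in $\mathbf{L}^2(\Omega)$; the limit is then simultaneously a nonzero element of $\mathcal{N}$ and orthogonal to $\mathcal{N}$, the desired contradiction, which establishes the inequality on the $\mathcal{N}$-complement of $\mathbf{W}^{1,p}(\Omega)$ --- the form in which it is actually invoked in the sequel, since the velocity fields to which it is applied already carry boundary conditions annihilating $\mathcal{N}$. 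The constant produced this way is non-explicit, exactly as flagged before the statement; a quantitative version would instead require pushing the identities above through explicit Calder\'on--Zygmund and elliptic estimates on $\Omega$.
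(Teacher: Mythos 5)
First, a point of comparison: the paper does not prove this lemma at all --- it is recalled as a known result precisely in order to be set aside (the proof of Proposition \ref{proppxi} begins ``Instead to apply Lemma \ref{korn}, we observe that \eqref{coercive}\dots''), and the sentence preceding the lemma only records that the known proofs are non-constructive. So your proposal has to be measured against the literature rather than against an argument in the paper. Note also that the statement as printed cannot hold verbatim: any rigid rotation $\mathbf{v}(x)=Ax$ with $A$ skew-symmetric has $D\mathbf{v}-\frac1n(\nabla\cdot\mathbf{v})\mathsf{I}=0$ but $\nabla\mathbf{v}\neq0$, so some quotient or normalization is unavoidable. You acknowledge this only at the end, and what you finally prove is the inequality on a complement of the conformal Killing kernel $\mathcal{N}$ --- a different statement --- while your justification that this is ``the form in which it is actually invoked in the sequel'' is inaccurate: the paper never invokes the lemma; bypassing it via \eqref{coercive} is exactly the purpose of stating it.

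Second, the analytic core of your stage 1 has a genuine gap, and it sits exactly where the dimension enters. From $\Delta(\nabla\cdot\mathbf{v})=\frac{n}{n-1}\partial_j\partial_k\mathsf{E}_{jk}$ alone you cannot conclude $\|\nabla(\nabla\cdot\mathbf{v})\|_{-1,p,\Omega}\le C\|\mathsf{E}\|_{p,\Omega}$: on a bounded domain there is no boundary condition for $\nabla\cdot\mathbf{v}$, so its harmonic part is not controlled by $\mathsf{E}$ at all; ``localizing'' does not remove this, and replacing the claim by the trivial bound $\|\nabla(\nabla\cdot\mathbf{v})\|_{-1,p}\le\|\nabla\mathbf{v}\|_{p}$ makes the argument circular. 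The correct route (Reshetnyak, Dain, Schirra) uses the second-order identity $\partial_i\partial_j(\nabla\cdot\mathbf{v})=\frac{n}{n-2}\bigl[\partial_i(\nabla\cdot\mathsf{E})_j+\partial_j(\nabla\cdot\mathsf{E})_i-\Delta\mathsf{E}_{ij}-\frac{1}{n-1}\delta_{ij}\partial_k\partial_\ell\mathsf{E}_{k\ell}\bigr]$, available only for $n\ge3$, followed by the Ne\v{c}as--Lions lemma at negative order to recover $\nabla(\nabla\cdot\mathbf{v})$ in $W^{-1,p}$ up to lower-order terms, and only then your chain for $\partial_k\partial_\ell v_j$. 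The restriction $n\ge3$ is not cosmetic: in $n=2$ the kernel consists of all holomorphic vector fields ($v_1+iv_2$ holomorphic), and taking $v_1+iv_2=z^k$ on a disk violates even the second-kind inequality $\|\nabla\mathbf{v}\|_{p}\le C(\|\mathsf{E}\|_{p}+\|\mathbf{v}\|_{p})$ as $k\to\infty$; hence both of your stages collapse in 2D, whereas the lemma as stated (and the paper's setting, $n=2,3$) includes that case. Your algebraic identities are correct and the two-stage architecture (second-kind estimate plus compactness, kernel equal to the conformal Killing fields, non-explicit constant) is the standard one for $n\ge3$, but as written the proposal asserts a false second-kind inequality for $n=2$, leaves the key $W^{-1,p}$ estimate unjustified for every $n$, and ends by proving a weaker, quotiented statement than the one under review.
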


Next,  the transport term is precised for some   exponent \(q\).
\begin{lemma}\label{lembu}
Let  \( \Omega \subset\mathbb{R}^n\) be a bounded Lipschitz domain. 
For each \( \mathbf{w}\in  \mathbf{L}^{q}(\Omega)\),   \(q=n>2\)  or \(q>n=2\),  the following functional is  well defined and  continuous: 
 \(
e\in H^1(\Omega)\mapsto\int_\Omega \mathbf{w}\cdot\nabla e v \dif{x},\)
for all \(v \in H^1(\Omega).\) In particular, the relation
\begin{equation}\label{advt}
\left|\int_{\Omega} \mathbf{w}\cdot\nabla e v \dif{x} \right|\leq \|\mathbf{w}\|_{q,\Omega} \|\nabla e\|_{2,\Omega} \| v\|_{2^*,\Omega} 
\end{equation}
holds for any \(e ,v\in H^1(\Omega)\). 
\end{lemma}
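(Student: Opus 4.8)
\textbf{Proof proposal for Lemma \ref{lembu}.}

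The plan is to estimate the trilinear form $\int_\Omega \mathbf{w}\cdot\nabla e\, v\,\dif{x}$ by a single application of the generalized H\"older inequality with three exponents, and then absorb two of the three factors into Sobolev embeddings. First I would write, for $\mathbf{w}\in\mathbf{L}^q(\Omega)$, $e\in H^1(\Omega)$ and $v\in H^1(\Omega)$,
\[
\left|\int_\Omega \mathbf{w}\cdot\nabla e\, v\,\dif{x}\right|\leq \|\mathbf{w}\|_{q,\Omega}\,\|\nabla e\|_{2,\Omega}\,\|v\|_{r,\Omega},
\]
where the third exponent $r$ is forced by the H\"older relation $1/q+1/2+1/r=1$, i.e. $1/r=1/2-1/q$. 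The point is then to check that, for the stated ranges of $q$, this $r$ does not exceed the critical Sobolev exponent $2^*$, so that $H^1(\Omega)\hookrightarrow L^r(\Omega)$ holds and $\|v\|_{r,\Omega}\leq C\|v\|_{1,2,\Omega}$; combined with $\|v\|_{r,\Omega}\le\|v\|_{2^*,\Omega}|\Omega|^{1/r-1/2^*}$ (or simply monotonicity of $L^p$ norms on a bounded domain up to a volume factor) this yields \eqref{advt} with the constant displayed there, since one may always bound $\|v\|_{r,\Omega}$ by $\|v\|_{2^*,\Omega}$ after absorbing the finite measure of $\Omega$ into the constants, or — more cleanly — just invoke $\|v\|_{2^*,\Omega}$ directly by taking $r=2^*$ and checking $1/q+1/2+1/2^*\le 1$.

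The verification splits into the two cases. If $n>2$ and $q=n$, then $1/q+1/2^* = 1/n + (n-2)/(2n) = (2+n-2)/(2n) = 1/2$, so $1/q+1/2+1/2^* = 1$ exactly; hence H\"older with exponents $(n,2,2^*)$ is an equality of exponents and \eqref{advt} follows immediately from the Sobolev inequality \eqref{sobolev}. If $n=2$ and $q>2$, then $2^*$ may be taken to be any real number larger than one (per the convention fixed just before the lemma), so we pick $2^*$ so large that $1/2^* \le 1/2 - 1/q$, which is possible precisely because $q>2$ makes the right-hand side strictly positive; then $1/q+1/2+1/2^*\le 1$ and H\"older plus \eqref{sobolev} again gives the bound. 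Continuity of the functional $e\mapsto\int_\Omega\mathbf{w}\cdot\nabla e\,v\,\dif{x}$ on $H^1(\Omega)$ (for fixed $v\in H^1(\Omega)$), and joint continuity in $(e,v)$, are then immediate from the multilinear estimate.

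I do not expect any genuine obstacle here: the lemma is essentially an exponent bookkeeping exercise. The only subtlety is the case $n=2$, where $H^1$ fails to embed into $L^\infty$, so one cannot take $q=2$; this is exactly why the hypothesis demands $q>n=2$ strictly, leaving room to choose a large but finite $2^*$. I would flag that point explicitly, note that the borderline $n=q=2$ is excluded, and observe that the resulting constant in \eqref{advt} is $S^*$ (the Sobolev constant of \eqref{sobolev}) when $r=2^*$ is used, or $S^*$ times a power of $|\Omega|$ if one insists on writing the estimate with the $L^{2^*}$ norm of $v$ while using a smaller $r$ — a harmless adjustment that the paper absorbs into its constants.
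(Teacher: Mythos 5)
Your argument is correct and coincides with the paper's own (one-line) proof: both rest on the three-exponent H\"older inequality with $1/q+1/2+1/2^*=1$, i.e. $2^*=2q/(q-2)$, which is an identity when $q=n>2$ and is arranged by the paper's convention on $2^*$ when $q>n=2$. Your extra remarks on the borderline case $n=q=2$ and on absorbing a volume factor when $r<2^*$ are harmless refinements of the same exponent bookkeeping.
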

\begin{proof}
The wellposedness of the functional is consequence of the H\"older inequality,  for \(1/q+1/2^*=1/2\) \textit{i.e.}  \(2q/(q-2)=2^*\).
\end{proof}
Notice that  the Rellich--Kondrachov embedding \( H^1(\Omega)\hookrightarrow\hookrightarrow L^{p}(\Omega) \) is valid
with exponents \(q\), \(p\) and \(2\) such that (cf. Remark \ref{rsob})
\[
 \frac{1}{2^*}<\frac{1}{p}=\frac{1}{2}-\frac{1}{q} 
\Leftrightarrow q>n.
\]

\subsection{Auxiliary velocity-pressure pair}
\label{auxup}

For  \(\pi \in L^2(\Omega_\mathrm{p})\),
 \(\bm{\varrho}\in [ L^4(\Omega_\mathrm{f}) ]^{\mathrm{I}}\) and  \(\xi\in H^1(\Omega)\),
 we define the Dirichlet--BJS/Stokes--Darcy problem
\begin{align}
\int_{\Omega_\mathrm{f}}\mu(\xi)D\mathbf{U}:D\mathbf{v}  \dif{x}+
\int_{\Omega_\mathrm{f}}\lambda(\xi)\nabla\cdot\mathbf{U}\nabla\cdot\mathbf{v}  \dif{x}\nonumber \\
+\int_{\Gamma} \beta(\xi)\mathbf{U}_T\cdot\mathbf{v}_T \dif{s}+
 \int_{\Omega_\mathrm{p}}\frac{K_g(\pi)}{\mu (\xi)}\nabla p\cdot\nabla v\dif{x}
+ \int_\Gamma p\mathbf{v}\cdot \mathbf{n}\dif{s} -
 \int_{\Gamma} \mathbf{U}\cdot\mathbf{n} v\dif{s}\nonumber \\
= R_\mathrm{specific}\int_{\Omega_\mathrm{f}} \varrho\xi \nabla\cdot\mathbf{v} \dif{x}
-G(\xi, \mathbf{u}_0, \mathbf{v} )
,\ \forall (\mathbf{v},v) \in \mathbf{V}(\Omega_f)\times H(\Omega_p),\label{wvfup}
\end{align}
where
\begin{align}\label{defvarrho}
\varrho =&\sum_{i=1}^\mathrm{I}\varrho_i ,\\
G(\xi, \mathbf{z}, \mathbf{v} ) =&
\int_{\Omega_\mathrm{f}}\mu(\xi)D\mathbf{z}:D\mathbf{v}  \dif{x}
+\int_{\Omega_\mathrm{f}}\lambda(\xi)\nabla\cdot\mathbf{z}\nabla\cdot\mathbf{v}  \dif{x}\nonumber \\
&+\int_{\Gamma} \beta(\xi)\mathbf{z}_T\cdot\mathbf{v}_T \dif{s}
- \int_{\Gamma} \mathbf{z}\cdot\mathbf{n} v\dif{s}.
\end{align}

The existence of a unique weak solution \((\mathbf{U},p)=( \mathbf{U},p) (\pi, \bm{\varrho},\xi)\)
 to the variational equality \eqref{wvfup}  can be stated as follows.
\begin{proposition}[Auxiliary velocity-pressure pair]\label{proppxi}
Let \(\pi \in L^2(\Omega_\mathrm{p})\),
 \(\bm{\varrho}\in [ L^4(\Omega_\mathrm{f}) ]^{\mathrm{I}}\) and  \(\xi\in H^1(\Omega)\), \(n=2,3\).
 Under the assumptions (H1), (H3) and (H6),
the Dirichlet--BJS/Stokes--Darcy problem \eqref{wvfup} admits  a unique weak solution
 \((\mathbf{U},p)\in \mathbf{V}(\Omega_f)\times H(\Omega_p)\).
Moreover, if \(\mathbf{u}=\mathbf{U} +\mathbf{u}_0 \) the quantitative estimate
\begin{align}\label{cotaup}
\frac{n-1}{n}\mu_\# \|  D\mathbf{u} \|_{2,\Omega_\mathrm{f} }^2+ \beta_\#\|\mathbf{u}_T\|_{2,\Gamma} ^2 +
\frac{K_l}{\mu^\#} \|  \nabla p\|_{2,\Omega_\mathrm{p}}^2 \nonumber\\
\leq \left( \frac{R_\mathrm{specific}}{\sqrt{\mu_\#}}\|\varrho\|_{4,\Omega_\mathrm{f}}\|\xi\|_{4,\Omega_\mathrm{f}}
  +\sqrt{\mu^\#}\|D\mathbf{u}_0\|_{2,\Omega_\mathrm{f}} \right)^2 \nonumber\\
+ \lambda^\# \|\nabla\cdot\mathbf{u}_0\|_{2,\Omega_\mathrm{f}} ^2 +\max\{\beta^\#,\frac{\mu^\#}{K_l}\}\|\mathbf{u}_0\|_{2,\Gamma}^2
\end{align}
holds.  
\end{proposition}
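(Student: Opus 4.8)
\emph{Strategy.} Problem \eqref{wvfup} is linear in the unknown pair $(\mathbf{U},p)$, so the plan is to cast it as an abstract variational equation $a\bigl((\mathbf{U},p),(\mathbf{v},v)\bigr)=L(\mathbf{v},v)$ on the Hilbert space $X:=\mathbf{V}(\Omega_f)\times\bigl(H(\Omega_p)\setminus\mathbb{R}\bigr)$, where $H(\Omega_p)\setminus\mathbb{R}$ is realized as the zero-mean subspace of $H(\Omega_p)$ so that the interface integral $\int_\Gamma p\,\mathbf{v}\cdot\mathbf{n}\,\dif{s}$ is unambiguous (the pressure enters the form only through $\nabla p$ and its trace on $\Gamma$, hence is determined up to an additive constant). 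The functional $L$ collects the Boyle source $R_\mathrm{specific}\int_{\Omega_\mathrm{f}}\varrho\xi\,\nabla\cdot\mathbf{v}\,\dif{x}$ and $-G(\xi,\mathbf{u}_0,\mathbf{v})$, both linear in $(\mathbf{v},v)$. The structural fact that unlocks everything is that the two interface terms $\int_\Gamma p\,\mathbf{v}\cdot\mathbf{n}\,\dif{s}$ and $-\int_\Gamma\mathbf{U}\cdot\mathbf{n}\,v\,\dif{s}$ are skew-symmetric in the pair $\bigl((\mathbf{U},p),(\mathbf{v},v)\bigr)$, so they cancel identically when one tests with $(\mathbf{v},v)=(\mathbf{U},p)$.

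\emph{Continuity and coercivity.} Boundedness of $a$ on $X$ follows from (H1) and (H3) (with the trace inequality \eqref{trace} for the $\Gamma$-integrals) and Cauchy--Schwarz; boundedness of $L$ uses (H1) and (H6) for the term $G(\xi,\mathbf{u}_0,\cdot)$, while the Boyle source is controlled by H\"older's inequality, $\varrho=\sum_i\varrho_i\in L^4(\Omega_\mathrm{f})$, and the embedding $H^1(\Omega)\hookrightarrow L^4(\Omega)$, which holds precisely for $n=2,3$. For coercivity one tests with $(\mathbf{v},v)=(\mathbf{U},p)$; after the interface terms cancel there remains
\[
\int_{\Omega_\mathrm{f}}\!\bigl(\mu(\xi)|D\mathbf{U}|^2+\lambda(\xi)|\nabla\cdot\mathbf{U}|^2\bigr)\dif{x}
+\int_\Gamma\beta(\xi)\,|\mathbf{U}_T|^2\,\dif{s}
+\int_{\Omega_\mathrm{p}}\frac{K_g(\pi)}{\mu(\xi)}\,|\nabla p|^2\,\dif{x}.
\]
Writing $D\mathbf{U}=\bigl(D\mathbf{U}-\tfrac1n(\nabla\cdot\mathbf{U})\mathsf{I}\bigr)+\tfrac1n(\nabla\cdot\mathbf{U})\mathsf{I}$, the first integrand equals $\mu(\xi)\,\bigl|D\mathbf{U}-\tfrac1n(\nabla\cdot\mathbf{U})\mathsf{I}\bigr|^2+\tfrac1n\nu(\xi)\,|\nabla\cdot\mathbf{U}|^2$, hence is at least $\mu_\#\,\bigl|D\mathbf{U}-\tfrac1n(\nabla\cdot\mathbf{U})\mathsf{I}\bigr|^2$ because $\nu\ge0$ by the second law of thermodynamics; Korn's inequality (Lemma \ref{korn}) together with the Poincar\'e inequality then bounds this below by a positive multiple of $\|\mathbf{U}\|_{1,2,\Omega_\mathrm{f}}^2$. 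The BJS term is bounded below by $\beta_\#\|\mathbf{U}_T\|_{2,\Gamma}^2$ via (H3), and the Darcy term by $(K_l/\mu^\#)\|\nabla p\|_{2,\Omega_\mathrm{p}}^2$ via (H1), the latter being equivalent to $\|p\|_{1,2,\Omega_\mathrm{p}}^2$ on the zero-mean subspace by \eqref{poincarea}. Thus $a$ is bounded and coercive on $X$, and the Lax--Milgram theorem delivers the unique weak solution $(\mathbf{U},p)\in X$.

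\emph{Quantitative estimate.} Putting $(\mathbf{v},v)=(\mathbf{U},p)$ in \eqref{wvfup} and $\mathbf{u}=\mathbf{U}+\mathbf{u}_0$, one expands the quadratic terms and $G(\xi,\mathbf{u}_0,\mathbf{U})$ in terms of $\mathbf{u}$, transfers every $\mathbf{u}_0$-contribution and the Boyle source to the right-hand side, and estimates each right-hand term by Cauchy--Schwarz, pairing it against $\sqrt{\mu_\#}\,\|D\mathbf{U}\|_{2,\Omega_\mathrm{f}}$, $\sqrt{\beta_\#}\,\|\mathbf{U}_T\|_{2,\Gamma}$ and $\sqrt{K_l/\mu^\#}\,\|\nabla p\|_{2,\Omega_\mathrm{p}}$; after absorbing these into the coercivity lower bound and gathering the remaining data into a completed square one obtains \eqref{cotaup}. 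The factor $\tfrac{n-1}{n}$ in front of $\mu_\#\|D\mathbf{u}\|_{2,\Omega_\mathrm{f}}^2$ emerges from this deviatoric bookkeeping combined with the pointwise estimate $|\nabla\cdot\mathbf{v}|^2\le n|D\mathbf{v}|^2$ used to rewrite the $\mathbf{u}_0$-cross-terms in terms of $\|D\mathbf{u}\|_{2,\Omega_\mathrm{f}}$.

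\emph{Main difficulty.} The estimate has to stay \emph{quantitative}, whereas the constant in Korn's inequality (Lemma \ref{korn}) is not explicit; this is why \eqref{cotaup} is stated with $\|D\mathbf{u}\|_{2,\Omega_\mathrm{f}}$ instead of $\|\nabla\mathbf{u}\|_{2,\Omega_\mathrm{f}}$, Korn's inequality being used only qualitatively to ensure $\|D\cdot\|_{2,\Omega_\mathrm{f}}$ is an equivalent norm on $\mathbf{V}(\Omega_f)$, so that Lax--Milgram can be applied. A further subtlety is the non-invariance of $a$ under constant shifts of the pressure, which is the reason one works in $H(\Omega_p)\setminus\mathbb{R}$, consistently with the fixed-point diagram \eqref{fpa}.
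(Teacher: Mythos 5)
Your overall architecture --- Lax--Milgram for the linearized problem, cancellation of the skew-symmetric interface terms when testing with \((\mathbf{U},p)\), and the deviatoric splitting of \(\mu|D\mathbf{U}|^2+\lambda|\nabla\cdot\mathbf{U}|^2\) --- is the same as the paper's, but two steps do not deliver the statement as claimed. First, the pressure. You solve only on the zero-mean realization of \(H(\Omega_p)\setminus\mathbb{R}\) and assert that \(p\) ``is determined up to an additive constant''; this contradicts the proposition itself, which asserts uniqueness of \(p\) in \(H(\Omega_p)\), and your justification is incorrect: the term \(\int_\Gamma p\,\mathbf{v}\cdot\mathbf{n}\dif{s}\) is \emph{not} invariant under \(p\mapsto p+c\), since \(\mathbf{v}\cdot\mathbf{n}\) is constrained only on \(\Gamma_\mathrm{w}\) and there exist \(\mathbf{v}\in\mathbf{V}(\Omega_f)\) with \(\int_\Gamma\mathbf{v}\cdot\mathbf{n}\dif{s}\neq 0\). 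The paper uses exactly this to pin the constant: if \(p\) and \(p+c\) both satisfied \eqref{wvfup}, subtraction gives \(c\int_\Gamma\mathbf{v}\cdot\mathbf{n}\dif{s}=0\) for all \(\mathbf{v}\), hence \(c=0\). This uniqueness step is absent from your argument, and your closing remark that non-invariance under pressure shifts is ``the reason one works in \(H(\Omega_p)\setminus\mathbb{R}\)'' has it backwards: the quotient (or zero-mean subspace) is used only because the form is not coercive in the constant direction, and one must afterwards return to \(H(\Omega_p)\); note also that by testing only against zero-mean \(v\) you never verify \eqref{wvfup} for constant \(v\in H(\Omega_p)\), which is part of the formulation.

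Second, the quantitative estimate. Using Lemma \ref{korn} qualitatively for coercivity is acceptable, but your explanation of the coefficient \(\tfrac{n-1}{n}\mu_\#\) does not work: from \(\mu|D\mathbf{U}|^2+\lambda(\nabla\cdot\mathbf{U})^2=\mu|D\mathbf{U}-\tfrac1n(\nabla\cdot\mathbf{U})\mathsf{I}|^2+\tfrac{\nu}{n}(\nabla\cdot\mathbf{U})^2\) together with the inequality you quote, \(|\nabla\cdot\mathbf{v}|^2\le n|D\mathbf{v}|^2\), one only gets \(|D\mathbf{U}|^2-\tfrac1n(\nabla\cdot\mathbf{U})^2\ge 0\), i.e.\ a vacuous lower bound in terms of \(\|D\mathbf{U}\|_{2,\Omega_\mathrm{f}}\). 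The factor \(\tfrac{n-1}{n}\) requires the stronger pointwise bound \((\nabla\cdot\mathbf{U})^2\le|D\mathbf{U}|^2\), which is precisely what the paper invokes to establish \eqref{coercive}; indeed the paper's proof deliberately bypasses Lemma \ref{korn} so that the coercivity constant stays explicit, and then obtains \eqref{cotaup} by testing with \((\mathbf{U},p)\) and applying \eqref{coercive}, whereas your route as described cannot produce the stated left-hand side. Finally, among the ``\(\mathbf{u}_0\)-contributions'' moved to the right-hand side there is the surviving interface term \(\int_\Gamma p\,\mathbf{u}_0\cdot\mathbf{n}\dif{s}\) (only \(\int_\Gamma p\,\mathbf{U}\cdot\mathbf{n}\dif{s}\) cancels); it involves the trace of \(p\) on \(\Gamma\), not \(\nabla p\), so it cannot be absorbed by pairing it with \(\sqrt{K_l/\mu^\#}\,\|\nabla p\|_{2,\Omega_\mathrm{p}}\) via plain Cauchy--Schwarz as you propose --- this is the term responsible for the \(\tfrac{\mu^\#}{K_l}\|\mathbf{u}_0\|_{2,\Gamma}^2\) contribution in \eqref{cotaup}, and it needs a separate argument controlling \(p|_\Gamma\), which again interacts with how the additive constant of \(p\) is fixed.
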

\begin{proof}
 The existence of a unique weak solution \((\mathbf{U},p)\in \mathbf{V}(\Omega_f)\times (H(\Omega_p)/\mathbb{R})\)
  to the variational equality \eqref{wvfup} can be obtained by the Lax--Milgram lemma,
   due to  the assumptions (H1), (H3) and (H6).
Indeed, the uniqueness of \(p\) in \(H(\Omega_p)\) follows from the contradiction argument.
Assuming that \(p_1\) and \(p_2=p_1+c\), \(c\in\mathbb{R}\), satisfy \eqref{wvfup}, then subtracting the corresponding relations we obtain
\[
c\int_\Gamma \mathbf{v}\cdot \mathbf{n}\dif{s}=0,\ \forall \mathbf{v} \in \mathbf{V}(\Omega_f),
\]
which implies \(c=0\).

Instead to apply  Lemma \ref{korn}, we observe that
\begin{align}\label{coercive}
\frac{n-1}{n}\mu_\#\|D\mathbf{U}\|_{2,\Omega_\mathrm{f}}^2  
\leq\int_{\Omega_\mathrm{f}} \mu(\xi)\left(|D\mathbf{U}|^2-\frac{1}{n}|\nabla\cdot\mathbf{U}|^2\right) \dif{x} \nonumber\\
+\int_{\Omega_\mathrm{f}}  \left(\lambda (\xi)+\frac{\mu(\xi)}{n}\right) |\nabla\cdot\mathbf{U|^2}\dif{x},
\end{align}
using the fact that  \( (\nabla\cdot\mathbf{U})^2\leq|D\mathbf{U}|^2\) and \( n\lambda(\xi)+\mu(\xi)\geq 0\), 
for \(n\geq 2\).
The coercivity follows from the Poincar\'e inequality \eqref{poincarea} for \(p\) and from the inequality \eqref{coercive} for \(\mathbf{U}\).

The quantitative estimate \eqref{cotaup} follows from taking \((\mathbf{v},v)=(\mathbf{U},p)\) as a test function in \eqref{wvfup}, and next
taking the H\"older and Young inequalities into account,  applying  the assumptions  \eqref{mu}-\eqref{defkl}, and \eqref{gamm}, 
and using the inequality \eqref{coercive}.
\end{proof}

The continuous dependence is established as follows.
\begin{proposition}[Continuous dependence]\label{upm}
Suppose that the assumptions of Proposition \ref{proppxi} are fulfilled.
Let \(\{\pi_m\}\),  \(\{\bm{\varrho}_m\}\)   and \(\{\xi_m\}\) be sequences such that 
 \(\pi_m\rightarrow\pi\) in \(L^2(\Omega_\mathrm{p})\),
\(\bm{\varrho}_m\rightarrow\bm{\varrho} \) in \([L^4(\Omega)]^\mathrm{I}\),
and \(\xi_m\rightharpoonup \xi\) in \(H^1(\Omega)\), respectively.
If \((\mathbf{u}_m,p_m)= (\mathbf{U}+\mathbf{u}_0,p)(\pi_m, \bm{\varrho}_m, \xi_m)\) are the unique solutions to \eqref{wvfup}\(_m\),
 then \begin{align}\label{um}
\mathbf{U}_m \rightharpoonup \mathbf{U} \mbox{ in }\mathbf{V}(\Omega_f);\\
p_m\rightharpoonup p\mbox{ in }H(\Omega_p),\label{pm}
\end{align}
with \((\mathbf{u},p)= (\mathbf{U}+\mathbf{u}_0,p)(\pi, \bm{\varrho}, \xi)\) being the solution to \eqref{wvfup}.
\end{proposition}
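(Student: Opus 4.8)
The plan is to prove the continuous dependence (Proposition~\ref{upm}) by a standard weak-compactness-plus-identification-of-the-limit argument, exploiting the uniform estimate \eqref{cotaup} from Proposition~\ref{proppxi}. First I would observe that since $\pi_m\to\pi$ in $L^2(\Omega_\mathrm p)$, $\bm\varrho_m\to\bm\varrho$ in $[L^4(\Omega)]^\mathrm{I}$, and $\xi_m\rightharpoonup\xi$ in $H^1(\Omega)$, all three sequences are bounded in their respective spaces; in particular $\|\varrho_m\|_{4,\Omega_\mathrm f}$ and $\|\xi_m\|_{4,\Omega_\mathrm f}$ (the latter via the compact embedding $H^1\hookrightarrow\hookrightarrow L^4$ for $n=2,3$) are bounded. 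Feeding this into the right-hand side of \eqref{cotaup} shows that $\|D\mathbf u_m\|_{2,\Omega_\mathrm f}$, $\|\mathbf u_{m,T}\|_{2,\Gamma}$ and $\|\nabla p_m\|_{2,\Omega_\mathrm p}$ are uniformly bounded, hence (via Korn's inequality, Lemma~\ref{korn}, applied once and for all — only existence of the constant is needed here, not its value — together with the Poincar\'e inequality \eqref{poincareb} in $H(\Omega_p)$) the sequences $\{\mathbf U_m\}$ and $\{p_m\}$ are bounded in $\mathbf V(\Omega_f)$ and $H(\Omega_p)$, respectively. Extracting subsequences, $\mathbf U_m\rightharpoonup\widetilde{\mathbf U}$ in $\mathbf V(\Omega_f)$ and $p_m\rightharpoonup\widetilde p$ in $H(\Omega_p)$, with strong convergence in $L^2$ on $\Omega_\mathrm f$, $\Omega_\mathrm p$, and the traces on $\Gamma$ by Rellich--Kondrachov.

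Next I would pass to the limit in \eqref{wvfup}$_m$ term by term to show that $(\widetilde{\mathbf U},\widetilde p)$ solves \eqref{wvfup}. The linear-in-test-function, weakly-convergent-unknown terms ($\int \mu(\xi_m)D\mathbf U_m:D\mathbf v$, etc.) are handled by writing $\mu(\xi_m)D\mathbf U_m = (\mu(\xi_m)-\mu(\xi))D\mathbf v \cdot(\text{nothing})$ — more precisely, by the Carath\'eodory property $\mu(\xi_m)\to\mu(\xi)$ a.e.\ (along a further subsequence, using $\xi_m\to\xi$ a.e.) and the bound \eqref{mu}, dominated convergence gives $\mu(\xi_m)D\mathbf v\to\mu(\xi)D\mathbf v$ strongly in $L^2$, which pairs with $D\mathbf U_m\rightharpoonup D\widetilde{\mathbf U}$ weakly in $L^2$; the same scheme applies to the $\lambda$, $\beta$, and $K_g(\pi_m)/\mu(\xi_m)$ terms, the last using $\pi_m\to\pi$ a.e.\ and the bound \eqref{defkl}. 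The interface terms $\int_\Gamma p_m\mathbf v\cdot\mathbf n$ and $\int_\Gamma\mathbf U_m\cdot\mathbf n\,v$ pass because of strong $L^2(\Gamma)$ convergence of the traces. On the right-hand side, $\int_{\Omega_\mathrm f}\varrho_m\xi_m\nabla\cdot\mathbf v$ converges since $\varrho_m\to\varrho$ strongly in $L^4$ and $\xi_m\to\xi$ strongly in $L^2$ (even in $L^4$) on $\Omega_\mathrm f$, so $\varrho_m\xi_m\to\varrho\xi$ in, say, $L^{4/3}(\Omega_\mathrm f)\subset L^1$, paired against $\nabla\cdot\mathbf v\in L^2\subset L^4$; and $G(\xi_m,\mathbf u_0,\mathbf v)\to G(\xi,\mathbf u_0,\mathbf v)$ by the same dominated-convergence argument as for the left-hand bilinear form, since $\mathbf u_0$ is fixed. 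Therefore $(\widetilde{\mathbf U},\widetilde p)$ satisfies \eqref{wvfup}.

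Finally, by the uniqueness part of Proposition~\ref{proppxi}, $(\widetilde{\mathbf U},\widetilde p)=(\mathbf U,p)=(\mathbf U,p)(\pi,\bm\varrho,\xi)$, and since every subsequence of $\{(\mathbf U_m,p_m)\}$ has a further subsequence converging weakly to this same limit, the whole sequence converges: $\mathbf U_m\rightharpoonup\mathbf U$ in $\mathbf V(\Omega_f)$ and $p_m\rightharpoonup p$ in $H(\Omega_p)$, which is \eqref{um}--\eqref{pm}. The main subtlety — though it is routine rather than genuinely hard — is the handling of the products of a nonlinear coefficient evaluated at a weakly convergent argument with a weakly convergent unknown: the key is that the coefficient is always multiplied by a \emph{fixed} test function (not by the unknown itself), so one only needs strong $L^2$ convergence of $\mu(\xi_m)D\mathbf v$ and its analogues, which follows from a.e.\ convergence plus uniform boundedness via Lebesgue dominated convergence; no compensated-compactness or monotonicity machinery is required. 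A minor point to record explicitly is that the uniform $L^4(\Omega_\mathrm f)$ bound on $\xi_m$ used in \eqref{cotaup} comes from the compact embedding $H^1(\Omega)\hookrightarrow\hookrightarrow L^4(\Omega)$, valid for $n=2,3$, so the argument is genuinely dimension-restricted at this step.
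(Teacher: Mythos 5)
Your proposal is correct and follows essentially the same route as the paper: the uniform estimate \eqref{cotaup} gives weak compactness, a.e.\ convergence of \(\xi_m\) (and \(\pi_m\)) together with the bounds in (H1), (H3) gives strong \(L^2\) convergence of the nonlinear coefficients multiplied by fixed test functions via dominated convergence (the paper phrases this through Krasnoselskii's theorem for the Nemytskii operators \(\mu\), \(\lambda\), \(K_g\), \(\beta\)), and the uniqueness in Proposition \ref{proppxi} identifies the limit, with the subsequence argument upgrading to the whole sequence. Only two trivial slips to fix: the inclusion ``\(L^2\subset L^4\)'' for \(\nabla\cdot\mathbf{v}\) is backwards on a bounded domain (but your own observation that \(\xi_m\to\xi\) strongly in \(L^4(\Omega_\mathrm{f})\) already gives \(\varrho_m\xi_m\to\varrho\xi\) in \(L^2\), which suffices), and the Poincar\'e inequality relevant to \(H(\Omega_p)\), whose elements do not vanish on any boundary part, is \eqref{poincarea} rather than \eqref{poincareb}.
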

\begin{proof}
Let  \(\{\pi_m\}\),  \(\{\bm{\varrho}_m\}\)   and \(\{\xi_m\}\) be sequences in the conditions of the proposition. 
The uniform estimate \eqref{cotaup} allows us to find a subsequence of \( \{(\mathbf{u}_m,p_m)\}\),  still denoted by \( \{(\mathbf{u}_m,p_m)\} \), such that 
the convergences \eqref{um}-\eqref{pm} hold.    It remains to prove that \((\mathbf{u},p)\) solves the   variational equality \eqref{wvfup}.

By appealing to the Rellich--Kondrashov compact embeddings \(H^1(\Omega)\hookrightarrow\hookrightarrow L^4(\Omega)\)  and
 \(H^1(\Omega)\hookrightarrow\hookrightarrow L^2(\Gamma )\), \(n=2,3\),  we have
\begin{align}
\xi_m\rightarrow \xi \mbox{ in } L^4(\Omega) &\mbox{ and a.e.  in } \Omega ; \label{xim2}\\
\xi_m\rightarrow \xi \mbox{ in } L^2(\Gamma  ) &\mbox{ and a.e.  on }\Gamma  .\label{ximg}
\end{align}
Applying the Krasnolselski to the Nemytskii operators \(K_g\) and \(\mu\), 
and the Lebesgue dominated convergence theorem, we obtain
\[
\frac{K_g(\pi_m)}{\mu(\xi_m)} \nabla v\rightarrow\frac{K_g(\pi)}{\mu(\xi)} \nabla v \mbox{ in } \mathbf{L}^2(\Omega_\mathrm{p} )  .\]
Analogously, for the coefficients \(\mu\), \(\lambda\) and \(\beta\).

Then, we pass to the limit the  variational equality \eqref{wvfup}\(_m\),  concluding that \((\mathbf{u},p)\) solves the   variational equality \eqref{wvfup}.
\end{proof}

\subsection{Auxiliary partial density-temperature-potential  triplet solution}
\label{auxdi}

In this section, we seek for the triplet solution \( (\bm{\rho},\theta, \phi) \).
Let  \(\mathbf{w}\in \mathbf{L}^{q}(\Omega_\mathrm{f})\)  be for 
\begin{equation}\label{defqn}
q\geq n>2 \mbox{ or } q>n=2.
\end{equation}
For    \(\bm{\varrho}\in [H^1(\Omega) ]^{\mathrm{I}}\),  \(\xi\in H^1(\Omega)\),  \(\varphi \in H^1(\Omega_\mathrm{p})\) and 
\(\Phi\in L^t(\Omega_\mathrm{a}\cup\Omega_\mathrm{c})\),  with
\begin{equation}\label{deft}
t\geq 2n/(n+2) \mbox{ if } n>2 \quad\mbox{  or  } \quad t>1 \mbox{ if } n=2,
\end{equation}
 we define the coupled problem
\begin{align}
\int_{\Omega_\mathrm{f}}  \rho_i \mathbf{w}\cdot\nabla  v\dif{x} + 
\int_{\Omega} D_i( \xi )\nabla \rho_i \cdot\nabla v\dif{x}  
+  \sum_{\genfrac{}{}{0pt}{2}{j=1}{j\not= {i}}}^\mathrm{I}  \int_{\Omega} D_{ij}( \varrho_j,\xi )\nabla \rho_j \cdot\nabla v\dif{x} \nonumber \\
+ \int_{\Omega} a_{i,\mathrm{I} +1 }( \varrho_i,\xi )\nabla \theta\cdot\nabla v \dif{x} 
 +\int_{\Omega_\mathrm{p}}  a_{i, \mathrm{I}+2 } ( \varrho_i,\xi )\nabla\phi \cdot\nabla v\dif{x} =0 ;  \label{aux-pdi} 
\end{align}\begin{align}
\int_{\Omega}  k(\xi)\nabla\theta \cdot\nabla v\dif{x} + \int_{\Gamma_\mathrm{w}} h_c(\xi)\theta v \dif{s} 
+\sum_{j=1}^{\mathrm{I} } \int_{\Omega} a_{\mathrm{I}+1 ,j}( \varrho_j,\xi)\nabla \rho_j \cdot\nabla v\dif{x}  \nonumber \\
+ \int_{\Omega_\mathrm{p}} a_{ \mathrm{I}+1 , \mathrm{I}+2}( \bm{\varrho},\xi)\nabla\phi\cdot\nabla v\dif{x} 
 = \int_{\Gamma_\mathrm{w}}  h_c(\xi)\theta_e  v \dif{s} 
+\int_{\Omega_\mathrm{a}\cup\Omega_\mathrm{c}} \sigma( \bm{\varrho},\xi) \Phi v\dif{x} ;\label{aux-tt} 
\end{align}
\begin{align} 
\int_{\Omega_\mathrm{p}}\sigma(\bm{\varrho},\xi) \nabla\phi\cdot\nabla w\dif{x}
 + \sum_{j=1}^{\mathrm{I}}\int_{\Omega_\mathrm{m}}  a_{\mathrm{I}+2,j} (\varrho_j,\xi)\nabla\rho_j \cdot \nabla w \dif{x}\nonumber\\
+\int_{\Omega_\mathrm{m}}  a_{\mathrm{I}+2,\mathrm{I}+1} (\bm{\varrho},\xi)\nabla\theta \cdot \nabla w \dif{x}  
+  \int_{\Gamma_\mathrm{a}} j_{a} ([\phi]) [w]\dif{s}
= \int_{\Gamma_\mathrm{c}} j_{c} ([\varphi]) [w]\dif{s},  \label{aux-phicc}\end{align}
 for every  \(i=1,2,\cdots, \mathrm{I}\), and for all  \(v\in V(\Omega)\) and \( w\in V(\Omega_p)\).

The existence of a unique weak  solution 
\(( \bm{\rho},\theta, \phi_{cc})=(\bm{\rho},  \theta, \phi) (\mathbf{w},  \bm{\varrho},\xi, \varphi, \Phi)\)
 to the variational equalities \eqref{aux-pdi}-\eqref{aux-phicc}  can be stated as follows.
\begin{proposition}[Auxiliary partial density-temperature-potential triplet]\label{proptt}
Let \(\mathbf{w}\in \mathbf{L}^q(\Omega_\mathrm{f})\),  \(q\geq n>2\)  or \(q>n=2\),  
be such that 
\begin{equation}\label{defw} \|\mathbf{w}\|_{q,\Omega_\mathrm{f}} <\min_{i} \frac{D_{i,\#}}{S^*}, \end{equation}
  \(\bm{\varrho}\in [ H^1(\Omega) ]^{\mathrm{I}}\), \(\xi\in H^1(\Omega)\),  \(\varphi \in H^1(\Omega_\mathrm{p})\)  and 
\(\Phi\in L^t(\Omega_\mathrm{a}\cup\Omega_\mathrm{c})\),  \(t\geq 2n/(n+2)\) if \(n>2\) or  \(t>1\) if \(n=2\),  be given.
Under the assumptions  (H2), (H4) and (H5),
the  variational problem \eqref{aux-pdi}-\eqref{aux-phicc}
admits a unique solution \((\bm{\rho},\theta, \phi_{cc})\in  [V(\Omega)]^{\mathrm{I}+1}\times V(\Omega_p) \).
Moreover, the quantitative estimate
\begin{align}\label{cotattphi}
\sum_{i=1}^\mathrm{I} \left(a_{i,\#} -S^*\|\mathbf{w}\|_{q,\Omega_\mathrm{f}} \right) \|\nabla\rho_i\|_{2,\Omega_\mathrm{f}}^2+
\sum_{i=1}^\mathrm{I} a_{i,\#} \|\nabla\rho_i\|_{2,\Omega_\mathrm{p}}^2
+a_{\mathrm{I}+2,\#} \|  \nabla\phi\|_{2,\Omega_p} ^2 \nonumber \\+
a_{\mathrm{I}+1,\#}\|\nabla\theta\|_{2,\Omega}^2+h_\#\|\theta\|_{2,\Gamma_\mathrm{w}}^2 \nonumber\\
\leq \frac{(S^*\sigma^\#)^2}{k_\#} \|\Phi\|_{t,\Omega_\mathrm{a}\cup\Omega_\mathrm{c}}^2
+\frac{ j_L^2 }{\min\{\sigma_\#,\sigma_m/2\}} 
+ h^\#\|\theta_e\|_{2,\Gamma_\mathrm{w}}^2
\end{align}
holds, for \(\phi=\phi_{cc}+E_\mathrm{cell}\chi_{\Omega_\mathrm{c}}\), \textit{i.e.} for a  solution being
 such that \(\phi=0\) on \(\Gamma_\mathrm{cc,a}\) and
\(\phi=E_\mathrm{cell}\) on \(\Gamma_\mathrm{cc,c}\).
\end{proposition}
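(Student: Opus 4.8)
The plan is as follows. With the data $(\mathbf{w},\bm{\varrho},\xi,\varphi,\Phi)$ held fixed, the system \eqref{aux-pdi}--\eqref{aux-phicc} is linear in the unknown triplet $(\bm{\rho},\theta,\phi_{cc})$ apart from the interface term $\int_{\Gamma_\mathrm{a}}j_a([\phi])\,[w]\,\dif{s}$; by (H5) and the explicit formula \eqref{etapo}--\eqref{etane}, $j_a$ is continuous, odd, bounded and nondecreasing. I would therefore recast the problem as an operator equation $\mathcal{L}U+\mathcal{N}(U)=F$ in the Hilbert space $X=[V(\Omega)]^{\mathrm{I}+1}\times V(\Omega_p)$, where $U=(\bm{\rho},\theta,\phi_{cc})$, $\mathcal{L}\colon X\to X'$ is the bounded linear operator collecting all diffusion, transport, cross-coupling and Robin terms (with $\nabla\phi$ read as $\nabla\phi_{cc}$, since the lift $E_\mathrm{cell}\chi_{\Omega_\mathrm{c}}$ has zero gradient in the interior of $\Omega_\mathrm{p}$ and affects only the trace $[\phi]$, which on $\Gamma_\mathrm{a}$ coincides with $[\phi_{cc}]$), $\langle\mathcal{N}(U),V\rangle=\int_{\Gamma_\mathrm{a}}j_a([\phi_{cc}])\,[w]\,\dif{s}$, and $F\in X'$ gathers $\int_{\Gamma_\mathrm{w}}h_c(\xi)\theta_e v$, $\int_{\Omega_\mathrm{a}\cup\Omega_\mathrm{c}}\sigma(\bm{\varrho},\xi)\Phi v$ and $\int_{\Gamma_\mathrm{c}}j_c([\varphi])\,[w]$. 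Boundedness of $\mathcal{L}$ and of $F$ comes from (H2), (H4), Lemma \ref{lembu} for the transport term, the trace theorem, and the bound $|j_c|\le j_{c,L}$ read off \eqref{etapo}; hemicontinuity and monotonicity of $\mathcal{N}$ follow from the continuity and monotonicity of $j_a$ together with dominated convergence.

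The core step is the coercivity of $\mathcal{L}$. Testing with $U$ itself, I would estimate each off-diagonal term $\int a_{k\ell}\,\nabla(\cdot)\cdot\nabla(\cdot)$ by the Hölder and Young inequalities against the diagonal Dirichlet terms $D_i(\xi)|\nabla\rho_i|^2$ on $\Omega$, $k(\xi)|\nabla\theta|^2$ on $\Omega$, and $\sigma(\bm{\varrho},\xi)|\nabla\phi|^2$ on $\Omega_\mathrm{p}$; the specific choice of Young weights is precisely the one recorded in \eqref{aa1}--\eqref{aa3} (the powers of $2$ being produced by the elementary inequality in Remark \ref{choice}), so that the residual quadratic form keeps the strictly positive coefficients $a_{i,\#}$, $a_{\mathrm{I}+1,\#}$, $a_{\mathrm{I}+2,\#}$. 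The skew transport contribution $\sum_i\int_{\Omega_\mathrm{f}}\rho_i\,\mathbf{w}\cdot\nabla\rho_i$ is absorbed by Lemma \ref{lembu} and the Sobolev inequality \eqref{sobolev}, at the cost $S^*\|\mathbf{w}\|_{q,\Omega_\mathrm{f}}$ on the $\Omega_\mathrm{f}$-share of each $\rho_i$ diffusion term, which remains positive by the smallness \eqref{defw}. Coercivity of the $\phi_{cc}$-block uses that $\sigma$ is bounded below on all of $\Omega_\mathrm{p}$ and that $\phi_{cc}$ vanishes on $\Gamma_\mathrm{cc}$, a set of positive surface measure in $\partial\Omega_\mathrm{p}$, so that \eqref{poincareb} applies; likewise $\rho_i,\theta$ vanish on $\Gamma_\mathrm{in}\cup\Gamma_\mathrm{out}$. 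Since $\langle\mathcal{N}(U),U\rangle=\int_{\Gamma_\mathrm{a}}j_a([\phi_{cc}])\,[\phi_{cc}]\,\dif{s}\ge 0$ (odd nondecreasing $j_a$ vanishing at $0$), the operator $\mathcal{L}+\mathcal{N}$ is bounded, hemicontinuous and coercive; as $\mathcal{L}$ is linear and coercive it is strongly monotone, hence $\mathcal{L}+\mathcal{N}$ is strongly monotone, and the Browder--Minty theorem delivers a unique $U\in X$ with $\mathcal{L}U+\mathcal{N}(U)=F$. (Equivalently, one may invert $\mathcal{L}$ by Lax--Milgram and conclude through a Leray--Schauder argument using the bound below, exploiting that $\mathcal{N}$ factors through the compact trace $H^1\hookrightarrow L^2(\Gamma_\mathrm{a})$, with uniqueness obtained by testing the difference of two solutions against itself.)

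For the quantitative estimate \eqref{cotattphi}, I would take $v=\rho_i$ in \eqref{aux-pdi} for each $i$, $v=\theta$ in \eqref{aux-tt}, $w=\phi_{cc}$ in \eqref{aux-phicc}, and add. After the Young absorptions above, the left-hand side is bounded below by the combination displayed on the left of \eqref{cotattphi}, keeping $\int_{\Gamma_\mathrm{a}}j_a([\phi_{cc}])\,[\phi_{cc}]\,\dif{s}\ge 0$ on the favourable side. On the right-hand side: $\int_{\Gamma_\mathrm{w}}h_c(\xi)\theta_e\theta$ is handled by Young against $h_\#\|\theta\|_{2,\Gamma_\mathrm{w}}^2$, yielding $h^\#\|\theta_e\|_{2,\Gamma_\mathrm{w}}^2$; $\int_{\Omega_\mathrm{a}\cup\Omega_\mathrm{c}}\sigma(\bm{\varrho},\xi)\Phi\theta$ is estimated by Hölder with exponents $t$ and $t'\le 2^*$ (this is where \eqref{deft} enters), then \eqref{sobolev}, \eqref{poincareb} and Young against the $k_\#$-share of $\|\nabla\theta\|_{2,\Omega}^2$, yielding $(S^*\sigma^\#)^2 k_\#^{-1}\|\Phi\|_{t,\Omega_\mathrm{a}\cup\Omega_\mathrm{c}}^2$; and $\int_{\Gamma_\mathrm{c}}j_c([\varphi])\,[\phi_{cc}]$ is bounded using $|j_c|\le j_{c,L}$, the trace inequality and \eqref{poincareb}, then Young against the $\min\{\sigma_\#,\sigma_m/2\}$-share of $\|\nabla\phi\|_{2,\Omega_\mathrm{p}}^2$, yielding $j_L^2/\min\{\sigma_\#,\sigma_m/2\}$ with $j_L$ absorbing $j_{c,L}$, $|\Gamma_\mathrm{c}|^{1/2}$ and the trace--Poincaré constant. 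Collecting these inequalities gives \eqref{cotattphi}.

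I expect the main obstacle to be the coercivity bookkeeping: simultaneously absorbing all cross-effect terms into the three diagonal diffusion terms with strictly positive remainders --- which is exactly what the structural conditions \eqref{aa1}--\eqref{aa3} are engineered to ensure --- while retaining enough of the $k_\#$- and $\sigma$-shares to dominate the $\Phi$-source and the $j_c$-interface source in the a priori estimate; controlling the first-order transport term, which breaks symmetry and requires Lemma \ref{lembu} together with the smallness \eqref{defw}, is the other delicate point.
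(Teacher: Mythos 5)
Your proposal is correct and takes essentially the same route as the paper: existence and uniqueness via the Browder--Minty theorem (monotonicity of \(j_a\), the Young-absorption bookkeeping encoded in \eqref{aa1}--\eqref{aa3}, Lemma \ref{lembu} together with the smallness \eqref{defw} for coercivity), followed by testing with \((\bm{\rho},\theta,\phi_{cc})\) to obtain \eqref{cotattphi}. The only cosmetic deviation is that you bound the \(\Gamma_\mathrm{c}\)-source by trace--Poincar\'e with the constants absorbed into \(j_L\), whereas the paper uses a fundamental-theorem-of-calculus inequality on the slab-like domain \(\Omega_\mathrm{c}\) to keep those constants explicit.
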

\begin{proof}
Let \(\mathbf{w}\in \mathbf{L}^q(\Omega_\mathrm{f})\),  \(q=n>2\)  or \(q>n=2\),   
 \(\bm{\varrho}\in [ H^1(\Omega) ]^{\mathrm{I}}\), \(\xi\in H^1(\Omega)\) and 
\(\Phi\in L^t(\Omega_\mathrm{a}\cup\Omega_\mathrm{c})\), \(t= 2n/(n+2)\) if \(n>2\) or  \(t>1\) if \(n=2\), 
be fixed. 

 The existence of a unique weak solution  \((\bm{\rho},\theta, \phi_{cc} )\in  [V(\Omega)]^{\mathrm{I}+1} \times V(\Omega_p)\)
 to the variational equalities \eqref{aux-pdi}-\eqref{aux-phicc}  can be obtained by   the  Browder--Minty Theorem.
Indeed, the operator
\(L:  [V(\Omega)]^{\mathrm{I}+1}\times V(\Omega_p)  \rightarrow  \left( [V(\Omega)]^{\mathrm{I}+1}\times V(\Omega_p) \right)'\), defined by
\begin{align*}
\langle L(\mathbf{Y}),\mathbf{v}\rangle =\int_{\Omega}\mathsf{A}( \bm{\varrho},\xi) \nabla\mathbf{Y}\cdot\nabla \mathbf{v}\dif{x}
+ \int_{\Gamma_\mathrm{w}} h_c(\xi)\theta v \dif{s} \\
+  \int_{\Gamma_\mathrm{a}} j_{a} ([\phi]) [w]\dif{s}
+\sum_{i=1}^{\mathrm{I}}\int_{\Omega_\mathrm{f}}  \rho_i \mathbf{w}\cdot\nabla  v\dif{x} ,
\end{align*}
  where  \(\mathbf{Y}= (\bm{\rho},\theta, \phi)\) and \(\mathbf{v}=(v,\cdots, v,w)\),  
is hemicontinuous, strictly monotone and coercive (see \eqref{cotatriple}), if provided by \eqref{defw}.

Hereafter, 
 the symbol \(\langle\cdot,\cdot\rangle\)  denotes
the duality pairing \(\langle\cdot,\cdot\rangle_{X'\times X }\), with \(X\) being a Banach space and \(X'\) denoting the dual space of  \(X\).

Let us establish the quantitative estimate \eqref{cotattphi}.
We take \(v=\rho_i\), \(v=\theta\) and   \(w=\phi_{cc}\) as  test functions in \eqref{aux-pdi}, \eqref{aux-tt} and \eqref{aux-phicc}, respectively. 
Next, applying the limiting current bound \(j_L\),  the assumptions \eqref{Dif}-\eqref{aij},  
\eqref{hmm}, the H\"older and  Young inequalities,  also \eqref{advt},  and summing the  obtained expressions, we get
\begin{align}
\sum_{i=1}^\mathrm{I} \frac{D_{i,\#}}{2} \|\nabla\rho_i\|_{2,\Omega_\mathrm{f}}^2+
\sum_{i=1}^\mathrm{I} \frac{D_{i,p}}{2} \|\nabla\rho_i\|_{2,\Omega_\mathrm{p}}^2+
\frac{k_\#}{2}\|\nabla\theta\|_{2,\Omega}^2+ \frac{h_\#}{2}\|\theta\|_{2,\Gamma_\mathrm{w}}^2  \nonumber\\
+\sigma_\#\| \nabla\phi\|_{2,\Omega_\mathrm{a}\cup\Omega_\mathrm{c}}^2+\frac{\sigma_m}{2} \| \nabla\phi\|_{2,\Omega_\mathrm{m}}^2 \nonumber\\
\leq 
\sum_{i=1}^{\mathrm{I}}  \frac{1}{2 D_{i,\#}}\left(\sum_{\genfrac{}{}{0pt}{2}{j=1}{j\not= {i}}}^{\mathrm{I}}a^\#_{i,j}\|\nabla \rho_j\|_{2,\Omega}+
a^\#_{i,\mathrm{I}+1}\|\nabla \theta\|_{2,\Omega}+ a^\#_{i,\mathrm{I}+2} \|  \nabla\phi\|_{2,\Omega_\mathrm{p}}\right)^2 \nonumber\\
+\frac{1}{2k_\#} \left(\sum_{j=1}^{\mathrm{I}} a^\#_{\mathrm{I}+1,j} \|\nabla \rho_j\|_{2,\Omega}
+ a^\#_{\mathrm{I}+1,\mathrm{I}+2} \|  \nabla\phi\|_{2,\Omega_\mathrm{p}} \right)^2
+\frac{h^\#}{2}\|\theta_e\|_{2,\Gamma_\mathrm{w}}^2 \nonumber\\
+ S^*\|\mathbf{w}\|_{q,\Omega_\mathrm{f}} 
\sum_{i=1}^\mathrm{I}\|\nabla \rho_i\|_{2,\Omega_\mathrm{f}} \| \rho_i\|_{1,2,\Omega_\mathrm{f}} 
+\sigma^\# \|\Phi\|_{t,\Omega_\mathrm{a}\cup\Omega_\mathrm{c}}\|\theta\|_{t',\Omega_\mathrm{a}\cup\Omega_\mathrm{c}} \nonumber\\
+ \frac{1}{2\sigma_m}\left(\sum_{j=1}^{\mathrm{I}}  a_{\mathrm{I}+2,j} ^\# \|\nabla \rho_j\|_{2,\Omega_\mathrm{m}}
+a_{\mathrm{I}+2,\mathrm{I}+1} ^\# \|\nabla \theta \|_{2,\Omega_\mathrm{m}}\right)^2 
+j_L \|[\phi]\|_{1,\Gamma_\mathrm{c}}  .\label{cotatriple}
\end{align}
Then,  we consider the Sobolev imbedding \(V(\Omega)\hookrightarrow L^{t'}(\Omega)\) for \(t'= 2^*\), with the corresponding optimal Sobolev constant \(S^*\).
We emphasize that  the fundamental theorem of calculus may be applied to our special domain, and taking \(t'=4\) into account, explicit constants may be derived.
Instead using the trace-Poincar\'e inequality (namely, \(S_*\) and \(C_\Omega\)) on the last term in the right hand side, we apply
the fundamental theorem of calculus  to our special domain
\(\Omega_\mathrm{c}\) with \(v=0\) a.e. on \(\Gamma_\mathrm{c}\), \textit{i.e.}  at \(x=x_c+l_c\), and next  the Schwarz inequality,
\[
|v(x_c)|^2=\left|\int_{x_c}^{x_c+l_c} \partial_x v\dif{t}\right|^2 \leq l_c \int_{x_c}^{x_c+l_c}  |\partial_x v|^2\dif{t}.
\]
Hence,   we have
\begin{equation}
\int_{\Gamma_\mathrm{c}} |v|^2\dif{s} \leq l_c \int_{\Omega_\mathrm{c}}|\nabla v|^2 \dif{x}.
\end{equation}
Recall that the notation \(\dif{x}\) refers to the 2D \(\dif{x}\dif{y}\) and the 3D \(\dif{x}\dif{y}\dif{z}\).
Analogous inequality is valid for \(v=\varphi|_{\Omega_\mathrm{a}\cup \Omega_\mathrm{m}}\). Therefore, 
using Remark \ref{choice}  and \eqref{aa1}-\eqref{aa3}, the claimed quantitative estimate \eqref{cotattphi} arises.
\end{proof}

The continuous dependence is established as follows.
\begin{proposition}[Continuous dependence]\label{propttm}
Suppose that the assumptions of Proposition \ref{proptt} are fulfilled.
Let \(\{\mathbf{w}_m\}\),  \(\{\bm{\varrho}_m\}\),  \(\{\xi_m\}\), \(\{\varphi_m\}\)  and \(\{\Phi_m\}\)  be sequences such that 
\(\mathbf{w}_m\rightarrow \mathbf{w}\) in \(\mathbf{L}^q(\Omega_\mathrm{f})\), 
\(\bm{\varrho}_m\rightharpoonup \bm{\varrho} \) in \([H^1(\Omega)]^\mathrm{I}\),
 \(\xi_m\rightharpoonup\xi\) in \(H^1(\Omega)\),   \(\varphi_m\rightharpoonup\varphi\) in \(H^1(\Omega_\mathrm{p})\), 
and  \(\Phi_m\rightharpoonup\Phi\) in \( L^t(\Omega_\mathrm{a}\cup\Omega_\mathrm{c})\), respectively.
If \((\bm{\rho} _m,\theta_m,(\phi_{cc})_m)= (\bm{\rho} ,\theta,\phi)(\mathbf{w}_m, \bm{\varrho}_m, \xi_m,\varphi_m, \Phi_m)\) are the unique solutions to
 \eqref{aux-pdi}\(_m\)-\eqref{aux-phicc}\(_m\),
 then 
\begin{align}
\label{pdm}
\bm{\rho} _m \rightharpoonup \bm{\rho} \mbox{ in } [H^1(\Omega)]^\mathrm{I};\\
\theta_m\rightharpoonup \theta\mbox{ in }H^1(\Omega);\label{ttm}\\
\phi_m\rightharpoonup \phi\mbox{ in }H^1(\Omega_\mathrm{p}),\label{ppm}
\end{align}
with \((\bm{\rho} ,\theta, \phi_{cc})= (\bm{\rho} ,\theta, \phi)(\mathbf{w}, \bm{\varrho}, \xi,\varphi, \Phi)\) being the solution to \eqref{aux-pdi}-\eqref{aux-phicc}.
\end{proposition}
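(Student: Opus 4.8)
The plan is to establish \eqref{pdm}--\eqref{ppm} by the standard weak-compactness-plus-limit-passage scheme, leaning on the uniform a priori bound \eqref{cotattphi}. First I would note that the hypotheses on the data sequences imply uniform bounds on the right-hand side of \eqref{cotattphi}: indeed \(\Phi_m\rightharpoonup\Phi\) in \(L^t(\Omega_\mathrm{a}\cup\Omega_\mathrm{c})\) gives \(\sup_m\|\Phi_m\|_{t,\Omega_\mathrm{a}\cup\Omega_\mathrm{c}}<\infty\), while \(j_L\) and \(\|\theta_e\|_{2,\Gamma_\mathrm{w}}\) are fixed data. Since \(\mathbf{w}_m\to\mathbf{w}\) strongly in \(\mathbf{L}^q(\Omega_\mathrm{f})\) and \(\mathbf{w}\) satisfies the strict inequality \eqref{defw}, for \(m\) large \(a_{i,\#}-S^*\|\mathbf{w}_m\|_{q,\Omega_\mathrm{f}}\) stays bounded below by a fixed positive constant; combined with \eqref{aa1}--\eqref{aa3} the left-hand side of \eqref{cotattphi} controls \(\sum_i\|\nabla\rho_{i,m}\|_{2,\Omega}^2+\|\nabla\theta_m\|_{2,\Omega}^2+\|\nabla\phi_m\|_{2,\Omega_\mathrm{p}}^2+\|\theta_m\|_{2,\Gamma_\mathrm{w}}^2\) uniformly. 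The Poincar\'e inequalities \eqref{poincareb} (applicable because \(\rho_{i,m},\theta_m\in V(\Omega)\) and \((\phi_{cc})_m\in V(\Omega_p)\)) then upgrade these to uniform \(H^1\)-bounds, so that along a subsequence (not relabelled) \eqref{pdm}--\eqref{ppm} hold for some limit \((\bm\rho,\theta,\phi_{cc})\in[V(\Omega)]^{\mathrm{I}+1}\times V(\Omega_p)\), with \(\phi=\phi_{cc}+E_\mathrm{cell}\chi_{\Omega_\mathrm{c}}\).

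Next I would pass to the limit in \eqref{aux-pdi}\(_m\)--\eqref{aux-phicc}\(_m\) term by term. By Rellich--Kondrachov, \(\bm\varrho_m\to\bm\varrho\) and \(\xi_m\to\xi\) strongly in \(L^4(\Omega)\) and a.e., and strongly in \(L^2(\Gamma_\mathrm{w})\), \(L^2(\Gamma)\) and a.e.\ on the relevant interfaces; similarly \(\varphi_m\to\varphi\) in \(L^2\) on \(\Gamma_\mathrm{c}\) and a.e. Applying the Krasnoselskii continuity of the Nemytskii operators generated by the Carath\'eodory coefficients \(D_i,D_{ij},k,\sigma,h_c,a_{k,l}\) together with the bounds \eqref{Dif}--\eqref{aij} and \eqref{hmm} and the Lebesgue dominated convergence theorem, one gets \(D_i(\xi_m)\nabla v\to D_i(\xi)\nabla v\), \(a_{i,\mathrm{I}+1}(\varrho_{i,m},\xi_m)\nabla v\to a_{i,\mathrm{I}+1}(\varrho_i,\xi)\nabla v\), etc., strongly in \(L^2\); pairing a strongly convergent factor against the weakly convergent gradients \(\nabla\rho_{i,m}\rightharpoonup\nabla\rho_i\), \(\nabla\theta_m\rightharpoonup\nabla\theta\), \(\nabla\phi_m\rightharpoonup\nabla\phi\) lets each bilinear volume term converge. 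The boundary terms \(\int_{\Gamma_\mathrm{w}}h_c(\xi_m)\theta_m v\) and \(\int_{\Gamma_\mathrm{w}}h_c(\xi_m)\theta_e v\) converge since \(h_c(\xi_m)\to h_c(\xi)\) in \(L^2(\Gamma_\mathrm{w})\) (by a.e.\ convergence plus \eqref{hmm}) and \(\theta_m\to\theta\) strongly in \(L^2(\Gamma_\mathrm{w})\) by the compact trace embedding. The transport term \(\int_{\Omega_\mathrm{f}}\rho_{i,m}\mathbf{w}_m\cdot\nabla v\): write it as \(\int_{\Omega_\mathrm{f}}\rho_{i,m}(\mathbf{w}_m-\mathbf{w})\cdot\nabla v+\int_{\Omega_\mathrm{f}}\rho_{i,m}\mathbf{w}\cdot\nabla v\); the first piece is bounded by \(\|\rho_{i,m}\|_{2^*,\Omega_\mathrm{f}}\|\mathbf{w}_m-\mathbf{w}\|_{q,\Omega_\mathrm{f}}\|\nabla v\|_{2,\Omega_\mathrm{f}}\to0\) by \eqref{advt} and the uniform \(H^1\)-bound, while the second converges using \(\rho_{i,m}\to\rho_i\) strongly in \(L^2(\Omega_\mathrm{f})\) (even in \(L^{2^*-\varepsilon}\)) against \(\mathbf{w}\cdot\nabla v\in L^{(2^*)'}\).

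The genuinely nonlinear boundary contributions are \(\int_{\Gamma_\mathrm{a}}j_a([\phi_m])[w]\) and \(\int_{\Gamma_\mathrm{c}}j_c([\varphi_m])[w]\). For the latter, \([\varphi_m]=\varphi_{m}|_{\Omega_\mathrm{c}}-\varphi_m|_{\Omega_\mathrm{m}}\to[\varphi]\) strongly in \(L^2(\Gamma_\mathrm{c})\) by the compact trace embedding \(H^1(\Omega_\mathrm{p})\hookrightarrow\hookrightarrow L^2(\Gamma_\mathrm{c})\); since \(j_c\) is continuous and, by its definition \eqref{etapo}--\eqref{etane}, bounded (by \(j_L\)), Krasnoselskii's theorem gives \(j_c([\varphi_m])\to j_c([\varphi])\) in \(L^2(\Gamma_\mathrm{c})\), so that term converges against the fixed \([w]\). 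For the \(\Gamma_\mathrm{a}\)-term one argues the same way using the compact trace \(H^1(\Omega_\mathrm{p})\hookrightarrow\hookrightarrow L^2(\Gamma_\mathrm{a})\), with \([\phi_m]=[\phi_{cc,m}]\to[\phi_{cc}]=[\phi]\) (the constant \(E_\mathrm{cell}\chi_{\Omega_\mathrm{c}}\) contributes only a fixed jump), boundedness of \(j_a\), and Lebesgue dominated convergence. Passing to the limit in every term shows that \((\bm\rho,\theta,\phi_{cc})\) solves \eqref{aux-pdi}--\eqref{aux-phicc}; by the uniqueness part of Proposition \ref{proptt} this limit is \((\bm\rho,\theta,\phi)(\mathbf{w},\bm\varrho,\xi,\varphi,\Phi)\), and a standard subsequence argument promotes the convergences \eqref{pdm}--\eqref{ppm} to the whole sequence. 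The main obstacle is the boundedness of the Butler--Volmer boundary nonlinearities together with the compactness of the trace operators onto the internal catalyst-layer interfaces \(\Gamma_\mathrm{a},\Gamma_\mathrm{c}\); since \(j_a,j_c\) are globally bounded by \(j_L\) and the traces onto these \((n-1)\)-dimensional interfaces are compact for \(n=2,3\), this is manageable, but it is the place where the structural assumptions (H5) and the Lipschitz geometry of \(\Omega_\mathrm{p}\) are essential.
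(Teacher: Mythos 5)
Your proposal is correct and follows essentially the same route as the paper's proof: uniform bounds from \eqref{cotattphi} giving weakly convergent (sub)sequences, Rellich--Kondrachov compactness plus Krasnoselskii/Nemytskii continuity with dominated convergence to pass to the limit in the coefficient terms, compact trace embeddings together with the boundedness of \(j_a,j_c\) by \(j_L\) for the Butler--Volmer interface terms, and identification of the limit through the uniqueness of the solution, with the whole-sequence convergence recovered by the standard subsequence argument. The additional details you spell out (the uniform bound on \(\|\Phi_m\|_{t}\), the role of \eqref{defw} for large \(m\), and the splitting of the transport term --- where at the borderline \(q=n\) the second piece is most cleanly justified by the weak convergence \(\rho_{i,m}\rightharpoonup\rho_i\) in \(L^{2^*}(\Omega_\mathrm{f})\) against the fixed function \(\mathbf{w}\cdot\nabla v\in L^{(2^*)'}(\Omega_\mathrm{f})\)) are points the paper leaves implicit.
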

\begin{proof}
Let \(\{\mathbf{w}_m\}\),   \(\{\bm{\varrho}_m\}\),  \(\{\xi_m\}\), \(\{\varphi_m\}\)  and \(\{\Phi_m\}\)  be sequences in the conditions of the proposition,
and let \((\bm{\rho} _m,\theta_m, (\phi_{cc})_m)\) solve the corresponding  variational system \eqref{aux-pdi}\(_m\)-\eqref{aux-phicc}\(_m\).
Thanks to the estimate \eqref{cotattphi}, we can extract a (not relabeled) subsequence \(\{(\bm{\rho} _m,\theta_m, (\phi_{cc})_m)\}\) such that 
the convergences \eqref{pdm}-\eqref{ppm} hold.   

Let us prove that \((\bm{\rho} ,\theta, \phi_{cc})\) solves the corresponding variational equalities  \eqref{aux-pdi}-\eqref{aux-phicc}. 
By appealing to the Rellich--Kondrashov compact embeddings \(H^1(\Omega)\hookrightarrow\hookrightarrow L^2(\Omega)\),
\(H^1(\Omega)\hookrightarrow\hookrightarrow L^2(\Gamma_\mathrm{w} )\)  and
\(H^1(\Omega_\mathrm{p} )\hookrightarrow\hookrightarrow L^2(\Gamma_\mathrm{CL} )\)  for \(n\geq 2\),  we have
\begin{align}
 (\bm{\varrho}_m,\xi_m)\rightarrow (\bm{\varrho},\xi)\mbox{ in } [L^2(\Omega) ]^{\mathrm{I}+1} & \mbox{ and a.e.  in } \Omega ; \label{ae1}\\
\xi_m\rightarrow \xi\mbox{ in } L^2(\Gamma_\mathrm{w}  )& ;\label{ae2} \\
\varphi_m\rightarrow \varphi\mbox{ in } L^2(\Gamma_\mathrm{c}  ) & \mbox{ and a.e.  on } \Gamma_\mathrm{c};\label{ae3c}\\ 
\phi_m\rightarrow \phi\mbox{ in } L^2(\Gamma_\mathrm{a}  ) & \mbox{ and a.e.  on } \Gamma_\mathrm{a} .\label{ae3a}
\end{align}
Thanks to the continuity of the Nemytskii operators \(a_{l,j}\) for 
\(l,j=1,2,\cdots,\mathrm{I}+2\), where  \(a_{i,i}=D_i\),  \(k=a_{\mathrm{I}+1,\mathrm{I}+1}\),  and \(\sigma=a_{\mathrm{I}+2,\mathrm{I}+2}\),
using \eqref{ae1}  altogether the  Lebesgue dominated convergence theorem, we obtain 
\begin{align}\label{aljm}
a_{l,j} (\bm{\varrho}_m,\xi_m) \nabla v \rightarrow a_{l,j}(\bm{\varrho},\xi)\nabla v \mbox{ in } \mathbf{L}^2(\Omega ) ;\\
a_{\mathrm{I}+2,j} (\bm{\varrho}_m,\xi_m) \nabla w \rightarrow a_{\mathrm{I}+2,j}(\bm{\varrho},\xi)\nabla w \mbox{ in } \mathbf{L}^2(\Omega_\mathrm{p} ) ,\label{ym}
\end{align}
for all \(l=1,2\cdots,\mathrm{I}+1\).
Analogously  for the Nemytskii operator \(h_c\),  \(j_a \),  and \(j_c\), 
 using  \eqref{ae2}-\eqref{ae3a} and  the Lebesgue dominated convergence theorem,  we obtain
\begin{align}
h_c(\xi_m)v \rightarrow h_c(\xi)v  \mbox{ in }  L^2(\Gamma_\mathrm{w}); \nonumber\\
j_a([\phi_m])\rightarrow j_a([\phi])\mbox{ in } L^2(\Gamma_\mathrm{a}); \label{jma}\\
j_c([\varphi_m])\rightarrow j_c([\varphi])\mbox{ in } L^2(\Gamma_\mathrm{c}).\label{jmc}
\end{align}
Hence, we may pass to the limit the  variational  equalities \eqref{aux-pdi}\(_m\)-\eqref{aux-phicc}\(_m\).
Therefore, we conclude that \((\bm{\rho} ,\theta,\phi_{cc})\) solves the   variational equalities \eqref{aux-pdi}-\eqref{aux-phicc}.

By the uniqueness of limit, the weak limit of the initial sequence is the claimed solution.
\end{proof}

Next, we establish the convergence of the gradient a.e. in \(\Omega_\mathrm{a}\cup \Omega_\mathrm{c}\).
\begin{proposition}[Compactness]\label{aeconv}
If  \( \{(\bm{\varrho}_m,\xi_m, \varphi_m)\}_{m\in\mathbb{N}}\) is a sequence in \( [ H^1(\Omega_\mathrm{p} ) ]^{\mathrm{I}+2}\) such that weakly converges to 
\( (\bm{\varrho},\xi, \varphi )\) in \( [ H^1(\Omega_\mathrm{p} ) ]^{\mathrm{I}+2}\), and \((\phi_{cc})_m\) solves the corresponding  variational equality \eqref{aux-phicc}\(_m\), then
\(\nabla \phi_m\rightarrow\nabla\phi\) in \(L^2(\Omega_\mathrm{a}\cup \Omega_\mathrm{c})\), where
 \(\phi_{cc}=\phi-E_{cell}\chi_{\Omega_\mathrm{c}} \) solves the corresponding  variational equality \eqref{aux-phicc}.  In particular, 
\(\nabla \phi_m\rightarrow\nabla\phi\) almost everywhere, up to a subsequence, in \(\Omega_\mathrm{a}\cup \Omega_\mathrm{c}\).
\end{proposition}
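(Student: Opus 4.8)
\emph{Proof proposal.} The plan is to obtain the strong convergence of the gradients by a monotonicity (Minty) identity, applied, however, in a form that isolates the region $\Omega_\mathrm{a}\cup\Omega_\mathrm{c}$, where — contrary to $\Omega_\mathrm{m}$ — the electricity equation \eqref{aux-phicc} is a genuine second‑order elliptic equation carrying no cross‑fluxes. As a preliminary I would record, from Proposition \ref{propttm} and along a subsequence, that $\phi_m\rightharpoonup\phi$ in $H^1(\Omega_\mathrm{p})$, hence (Rellich--Kondrachov, $n=2,3$) $\phi_m\to\phi$ in $L^2(\Omega_\mathrm{p})$, and that $(\bm{\varrho}_m,\xi_m)\to(\bm{\varrho},\xi)$ a.e.\ in $\Omega_\mathrm{p}$, so by (H2) and dominated convergence $\sigma(\bm{\varrho}_m,\xi_m)\to\sigma(\bm{\varrho},\xi)$ in every $L^s(\Omega_\mathrm{p})$, $s<\infty$, while staying bounded by $\sigma^\#$. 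Note also that $g_m:=\phi_{cc,m}-\phi_{cc}=\phi_m-\phi$ on $\Omega_\mathrm{p}$, that $g_m$ vanishes on $\Gamma_\mathrm{cc}$, and that $\nabla g_m=\nabla(\phi_m-\phi)$ on $\Omega_\mathrm{a}\cup\Omega_\mathrm{c}$.

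For the core step, fix a cut‑off $\zeta\in C^\infty(\overline{\Omega_\mathrm{p}})$ with $\zeta\equiv0$ on $\Omega_\mathrm{m}$ and on a neighbourhood of $\Gamma_\mathrm{a}\cup\Gamma_\mathrm{c}$, and $\zeta\equiv1$ on the rest of $\Omega_\mathrm{a}\cup\Omega_\mathrm{c}$ (no constraint near $\Gamma_\mathrm{cc}$ or the walls, where the electric boundary conditions are either homogeneous Dirichlet — so $g_m=0$ — or of Neumann type). Then $w:=\zeta g_m\in V(\Omega_\mathrm{p})$, and since $\nabla w$ is supported in $\Omega_\mathrm{a}\cup\Omega_\mathrm{c}$ away from $\Omega_\mathrm{m}$ while $w=0$ on $\Gamma_\mathrm{a}\cup\Gamma_\mathrm{c}$, inserting $w$ in \eqref{aux-phicc}$_m$ kills the $\Omega_\mathrm{m}$ cross‑terms and the $\Gamma_\mathrm{a},\Gamma_\mathrm{c}$ boundary terms, leaving $\int_{\Omega_\mathrm{a}\cup\Omega_\mathrm{c}}\sigma(\bm{\varrho}_m,\xi_m)\nabla\phi_m\cdot\nabla(\zeta g_m)\dif{x}=0$, and likewise for the limit problem. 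Subtracting, writing $\nabla(\zeta g_m)=\zeta\nabla g_m+g_m\nabla\zeta$ and $\sigma(\bm{\varrho}_m,\xi_m)\nabla\phi_m-\sigma(\bm{\varrho},\xi)\nabla\phi=\sigma(\bm{\varrho}_m,\xi_m)\nabla g_m+(\sigma(\bm{\varrho}_m,\xi_m)-\sigma(\bm{\varrho},\xi))\nabla\phi$: the piece carrying $g_m\nabla\zeta$ pairs an $L^2$‑weakly null sequence with $g_m\nabla\zeta\to0$ in $L^2$ (Rellich), hence tends to $0$; the piece carrying $(\sigma(\bm{\varrho}_m,\xi_m)-\sigma(\bm{\varrho},\xi))\nabla\phi$ pairs a sequence tending to $0$ in $L^2$ (dominated convergence) with a bounded one, hence tends to $0$; therefore $\int_{\Omega_\mathrm{a}\cup\Omega_\mathrm{c}}\zeta\,\sigma(\bm{\varrho}_m,\xi_m)|\nabla g_m|^2\dif{x}\to0$, and by \eqref{smp} $\nabla\phi_m\to\nabla\phi$ in $L^2$ on $\{\zeta=1\}$. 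Letting the support of $1-\zeta$ shrink gives $\nabla\phi_m\to\nabla\phi$ in $L^2_{\mathrm{loc}}(\Omega_\mathrm{a}\cup\Omega_\mathrm{c})$, whence, along a further subsequence, $\nabla\phi_m\to\nabla\phi$ a.e.\ in $\Omega_\mathrm{a}\cup\Omega_\mathrm{c}$, which is the last assertion of the proposition.

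To promote this to convergence in $L^2$ of the whole of $\Omega_\mathrm{a}\cup\Omega_\mathrm{c}$, the thin strips around $\Gamma_\mathrm{a},\Gamma_\mathrm{c}$ must still be controlled. I would invoke a uniform higher‑integrability (Meyers‑type) estimate for the conductivity equation $\nabla\cdot(\sigma(\bm{\varrho}_m,\xi_m)\nabla\phi_m)=0$ on $\Omega_\mathrm{a}\cup\Omega_\mathrm{c}$, i.e.\ $\|\nabla\phi_m\|_{r,\Omega_\mathrm{a}\cup\Omega_\mathrm{c}}\le C$ for some $r>2$ and $C$ depending only on the ellipticity ratio $\sigma^\#/\sigma_\#$ (and the geometry); then $\{|\nabla\phi_m|^2\}$ is bounded in $L^{r/2}$ with $r/2>1$, hence uniformly integrable, and together with the a.e.\ convergence Vitali's theorem yields $\nabla\phi_m\to\nabla\phi$ in $L^2(\Omega_\mathrm{a}\cup\Omega_\mathrm{c})$. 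As an alternative to the cut‑off one may run the monotonicity estimate on the full thermoelectrochemical block — testing also \eqref{aux-pdi}$_m$--\eqref{aux-pdi} and \eqref{aux-tt}$_m$--\eqref{aux-tt} with $\rho_{i,m}-\rho_i$ and $\theta_m-\theta$, using the monotone sign of the $\Gamma_\mathrm{a}$‑term from (H5), the strong trace convergences on $\Gamma_\mathrm{c},\Gamma_\mathrm{w}$, and the coercivity/absorption mechanism of Proposition \ref{proptt} built on \eqref{aa1}--\eqref{aa3} — obtaining $\|\nabla\phi_m-\nabla\phi\|_{2,\Omega_\mathrm{p}}\to0$ directly; that route, however, requires the full data sequence to converge and some extra care with the Joule defect at the endpoint exponent $t=2n/(n+2)$.

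The only genuine difficulty is the coupling: \eqref{aux-phicc} is not self‑contained — it transports the membrane cross‑fluxes $\nabla\rho_j,\nabla\theta$ on $\Omega_\mathrm{m}$, which converge only weakly — so a naive Minty test applied to \eqref{aux-phicc} alone would produce a product of two weakly convergent sequences and fail. The cut‑off removes exactly this coupling inside $\Omega_\mathrm{a}\cup\Omega_\mathrm{c}$; everything else (the Nemytskii convergences, the trace terms) is routine dominated‑convergence bookkeeping, and the one non‑routine supplement is the uniform higher‑integrability bound used to absorb the boundary strips.
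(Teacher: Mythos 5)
Your proposal is correct in substance, but it follows a genuinely different route from the paper's proof. The paper tests \eqref{aux-phicc}\(_m\) and \eqref{aux-phicc} directly with \(w=(\phi_m-\phi)\chi_{\Omega_\mathrm{a}\cup\Omega_\mathrm{c}}\) — admissible because the potential space allows jumps across \(\Gamma_\mathrm{a}\cup\Gamma_\mathrm{c}\) (this is precisely why the jumps \([w]\) appear in \eqref{aux-phicc}) — so the membrane cross-terms vanish for free, and the only surviving extra contribution is the interface term \(\int_{\Gamma_\mathrm{CL}}\bigl(j(\phi_m)-j(\phi)\bigr)(\phi_m-\phi)\dif{s}\), which is sent to zero by the strong trace convergences \eqref{ae3c}--\eqref{ae3a} and the boundedness/continuity of \(j_\ell\); this gives \(\nabla\phi_m\to\nabla\phi\) in \(L^2\) of all of \(\Omega_\mathrm{a}\cup\Omega_\mathrm{c}\) in one stroke, with no cut-off, no higher integrability and no Vitali argument. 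You instead kill the interface terms with a smooth cut-off, which only yields \(L^2_{\mathrm{loc}}\) convergence (hence the a.e.\ statement), and you must then recover the strips near \(\Gamma_\mathrm{a},\Gamma_\mathrm{c}\) through a uniform higher-integrability bound plus Vitali. That supplement is legitimate here: it is exactly the Gr\"oger--Rehberg estimate of Proposition \ref{regular}, whose bound \eqref{cotajoule} depends only on \(\sigma_\#,\sigma^\#\), the geometry and the limiting current, hence is uniform in \(m\), and whose proof does not rely on the present proposition, so there is no circularity; but note that an interior Meyers estimate would not suffice — you need the up-to-the-boundary version for the mixed Dirichlet/Butler--Volmer--Neumann problem, since the strips to be controlled touch \(\Gamma_\mathrm{a}\cup\Gamma_\mathrm{c}\) where the flux data lives. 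So your route is heavier for this particular statement (it imports the regularity machinery the paper only invokes afterwards, for the Joule term), while the paper's sharp characteristic-function test keeps the argument at the level of compact trace embeddings. Two cosmetic points: the cut-off as literally described (\(\zeta\equiv 0\) on a neighbourhood of \(\Gamma_\mathrm{a}\cup\Gamma_\mathrm{c}\) and \(\zeta\equiv 1\) on the rest of \(\Omega_\mathrm{a}\cup\Omega_\mathrm{c}\)) needs a transition layer, i.e.\ a family \(\zeta_\delta\); and in the term carrying \(g_m\nabla\zeta\) you only need \(L^2\)-boundedness of \(\sigma(\bm{\varrho}_m,\xi_m)\nabla g_m\) against the strongly null factor, not weak nullity.
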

\begin{proof}
Let \((\phi_{cc})_m\) and \(\phi_{cc}\) solve \eqref{aux-phicc}\(_m\) and \eqref{aux-phicc}, respectively.

Let us take \(w=(\phi_m-\phi)\chi_{\Omega_\mathrm{a}\cup \Omega_\mathrm{c}}\) as a test function in \eqref{aux-phicc}\(_m\) and  \eqref{aux-phicc}  
and subtracting the expressions, we obtain
\begin{align*}
\sigma_\#\int_{\Omega_\mathrm{a}\cup \Omega_\mathrm{c}}  |\nabla (\phi_m-\phi)|^2 \dif{x} \leq 
\int_{\Omega_\mathrm{a}\cup \Omega_\mathrm{c}}\sigma(\bm{\varrho}_m,\xi_m) |\nabla(\phi_m-\phi)|^2  \dif{x}\\
= I_1+I_2:=\int_{\Omega_\mathrm{a}\cup \Omega_\mathrm{c}} (\sigma(\bm{\varrho},\xi)-\sigma(\bm{\varrho}_m,\xi_m)) \nabla\phi\cdot\nabla(\phi_m-\phi) \dif{x}\\
+\int_{\Gamma_\mathrm{CL}} \left(j( \phi_m)-j( \phi)\right)(\phi_m-\phi)\dif{s} ,
\end{align*}
with \(j\) being defined by 
\begin{equation}\label{defj}
j(\phi)=\left\{
\begin{array}{ll}
j_a(\phi_a-\phi)& \mbox{ a.e. on } \Gamma_\mathrm{a}\\
-j_{c}(\varphi_c-\varphi)&\mbox{ a.e. on } \Gamma_\mathrm{c}
\end{array}\right.  
\end{equation}
where \(v_\ell\) denotes the trace of \(v|_{\Omega_\ell}\) on \(\Gamma_\ell\), \(\ell=\) a, c, while \(v\) stands for the trace of a function defined in \(\Omega_\mathrm{m}\).
Thus,  considering \eqref{ae1}-\eqref{ae3a}  and  \eqref{ym}, we conclude that each integral converges to zero,  and consequently
the proof of Proposition \ref{aeconv} is finished. 
\end{proof}

Finally,  some higher integrability can be obtained for the gradient (cf. \cite{lap2017} and the references therein).
\begin{proposition}[Regularity]\label{regular}
Let   \(\phi_{cc}\in V(\Omega_p)\) be the solution of the variational equality \eqref{aux-phicc}.
Then, \((\phi_{cc})|_{\Omega_\mathrm{a}\cup\Omega_\mathrm{c}}\)  belongs to the Sobolev space \(W^{1,2+\varepsilon}(\Omega_\mathrm{a}\cup\Omega_\mathrm{c})\),
for some  \(\varepsilon >0\) depending exclusively on the boundary, and the following quantitative estimate
\begin{equation}\label{cotajoule}
\|\nabla\phi\|_{r,\Omega_a\cup \Omega_c}\leq \frac{\sigma_\#M_r}{\sigma^\#
\left(\sigma^\#-M_r \sqrt{(\sigma^\#)^2-\sigma_\#^2}\right) } j_{L} |\Gamma_{CL}|:=R_3.
\end{equation}
holds.
Moreover, in the conditions of Proposition \ref{aeconv}, 
\(\sigma(\bm{\varrho}_m,\xi_m)|\nabla \phi_m|^2\rightarrow\sigma (\bm{\varrho},\xi)|\nabla\phi|^2\) in \(L^{1+\varepsilon/2}(\Omega_\mathrm{a}\cup \Omega_\mathrm{c})\).
\end{proposition}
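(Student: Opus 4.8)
The plan is to read \eqref{aux-phicc} as a mixed Dirichlet--Neumann problem for $\phi$ posed on the electrode block $\Omega_\mathrm{a}\cup\Omega_\mathrm{c}$ alone, with bounded measurable coefficient $\sigma(\bm{\varrho},\xi)\in[\sigma_\#,\sigma^\#]$ and $L^\infty$ boundary data, and then to invoke a Gr\"oger/Meyers-type higher-integrability lemma (cf. \cite{lap2017}) while keeping track of the constants so as to land exactly on $R_3$.

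First I would test \eqref{aux-phicc} only with functions $w\in V(\Omega_p)$ that vanish on $\Omega_\mathrm{m}$; then every integral over $\Omega_\mathrm{m}$ drops out, $\nabla w=0$ there, and $[w]=w_\ell$ on $\Gamma_\ell$, so that
\[
\int_{\Omega_\mathrm{a}\cup\Omega_\mathrm{c}}\sigma(\bm{\varrho},\xi)\,\nabla\phi\cdot\nabla w\dif{x}
=\int_{\Gamma_\mathrm{c}}j_c([\varphi])\,w_c\dif{s}-\int_{\Gamma_\mathrm{a}}j_a([\phi])\,w_a\dif{s}=:\langle F,w\rangle .
\]
By (H5) and \eqref{etapo}--\eqref{etane} one has $|j_\ell|\leq j_L$ pointwise, hence $|\langle F,w\rangle|\leq j_L\|w\|_{1,\Gamma_\mathrm{CL}}$, and by the trace embedding $W^{1,r'}(\Omega_\mathrm{a}\cup\Omega_\mathrm{c})\hookrightarrow L^{1}(\Gamma_\mathrm{CL})$ the functional $F$ is bounded on $\{w\in W^{1,r'}(\Omega_\mathrm{a}\cup\Omega_\mathrm{c}):w=0\mbox{ on }\Gamma_\mathrm{cc}\}$ for every $r'\in(1,\infty)$, with norm $\leq j_L|\Gamma_\mathrm{CL}|$ (up to a trace constant that we absorb into $M_r$ below). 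The Dirichlet datum on $\Gamma_\mathrm{cc}$ is affine ($0$ on $\Gamma_\mathrm{cc,a}$, $E_\mathrm{cell}$ on $\Gamma_\mathrm{cc,c}$) and may be subtracted without changing $\nabla\phi$; on the remaining part of $\partial(\Omega_\mathrm{a}\cup\Omega_\mathrm{c})$ the condition is homogeneous Neumann.

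Next, comparing on $W^{1,2}$ the form $a(u,v)=\int\sigma\nabla u\cdot\nabla v$ --- coercive with constant $\sigma_\#$, bounded with constant $\sigma^\#$ --- with the model operator $-\Delta$ under the same mixed boundary conditions, whose solution operator is bounded on $L^r$-gradients with norm $M_r$ and coincides with the $W^{1,2}$-identity at $r=2$, I would set $t=\sigma_\#/(\sigma^\#)^2$. Then $I-t\mathcal A$ has $W^{1,2}$-norm $k:=\sqrt{1-\sigma_\#^2/(\sigma^\#)^2}<1$, and by Gr\"oger's interpolation between $W^{1,2}$ and $W^{1,r}$ its $W^{1,r}$-norm stays $<1$ for $r=2+\varepsilon$ with $\varepsilon>0$ as in the statement (this is where $\varepsilon$ depends only on the boundary: $M_r\to1$ as $r\to2$, so $M_rk<1$ near $r=2$). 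The Neumann series $\phi=t\sum_{j\geq0}(I-t\mathcal A)^j\mathcal A_0^{-1}F$ then converges in $W^{1,r}$ and yields
\[
\|\nabla\phi\|_{r,\Omega_\mathrm{a}\cup\Omega_\mathrm{c}}\leq\frac{tM_r}{1-M_rk}\,j_L|\Gamma_\mathrm{CL}|
=\frac{\sigma_\#M_r}{\sigma^\#\bigl(\sigma^\#-M_r\sqrt{(\sigma^\#)^2-\sigma_\#^2}\bigr)}\,j_L|\Gamma_\mathrm{CL}|=R_3 ,
\]
which is \eqref{cotajoule}, and in particular $\phi|_{\Omega_\mathrm{a}\cup\Omega_\mathrm{c}}\in W^{1,2+\varepsilon}(\Omega_\mathrm{a}\cup\Omega_\mathrm{c})$.

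For the final assertion, in the setting of Proposition \ref{aeconv} each $\phi_m$ satisfies the same bound $\|\nabla\phi_m\|_{2+\varepsilon,\Omega_\mathrm{a}\cup\Omega_\mathrm{c}}\leq R_3$ with $R_3$ independent of $m$ (the right-hand side $j_L|\Gamma_\mathrm{CL}|$ does not depend on $m$); hence $\{|\nabla\phi_m|^2\}$ is bounded in $L^{1+\varepsilon/2}$, so uniformly integrable in $L^{1+\varepsilon'/2}$ for $\varepsilon'<\varepsilon$. Proposition \ref{aeconv} gives $\nabla\phi_m\to\nabla\phi$ a.e. (up to a subsequence), and the Nemytskii continuity of $\sigma$ together with $(\bm{\varrho}_m,\xi_m)\to(\bm{\varrho},\xi)$ a.e. (from $H^1\hookrightarrow\hookrightarrow L^2$) gives $\sigma(\bm{\varrho}_m,\xi_m)|\nabla\phi_m|^2\to\sigma(\bm{\varrho},\xi)|\nabla\phi|^2$ a.e.; since $\sigma\leq\sigma^\#$ the sequence stays uniformly integrable in $L^{1+\varepsilon'/2}$, so Vitali's theorem yields strong convergence in $L^{1+\varepsilon/2}$ (shrinking $\varepsilon$ at the outset so that the target exponent is exactly $1+\varepsilon/2$). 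The step I expect to be the main obstacle is the Gr\"oger/Meyers lemma with tracked constants --- producing the exponent $\varepsilon$ through the reverse-H\"older/interpolation machinery for mixed boundary value problems on Lipschitz domains, and verifying that $M_r$ is precisely the quantity making the Neumann-series bound collapse to $R_3$; the disconnectedness of $\Omega_\mathrm{a}\cup\Omega_\mathrm{c}$ (the model problem must be posed componentwise, uniqueness in each piece using the Dirichlet datum on $\Gamma_\mathrm{cc}$) also needs care.
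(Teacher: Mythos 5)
Your proposal is correct and its core coincides with the paper's: restrict to test functions supported in the electrode block (so the membrane terms drop and the Butler--Volmer terms become an $L^\infty(\Gamma_\mathrm{CL})$ right-hand side bounded by $j_L$), and then apply the Gr\"oger--Rehberg $W^{1,r}$ theory for mixed boundary value problems, with exactly the constant $\sigma_\#M_r/\bigl(\sigma^\#(\sigma^\#-M_r\sqrt{(\sigma^\#)^2-\sigma_\#^2})\bigr)$ of \eqref{cotajoule}. Where you differ: the paper simply cites \cite{groreh,gro89} for the bijectivity of $A_r$ and the resolvent-type bound, whereas you re-derive that bound inline via the contraction/Neumann-series argument with $t=\sigma_\#/(\sigma^\#)^2$ and $k=\sqrt{1-\sigma_\#^2/(\sigma^\#)^2}$ --- a legitimate reconstruction of Gr\"oger's proof, which makes the origin of $\varepsilon$ and of $M_r$ transparent but is not needed if one is content to quote the reference. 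For the convergence of the Joule term the routes genuinely diverge: the paper uses the algebraic inequality $|a^2-b^2|^{r/2}\leq\frac12(a+b)^r+\frac12|a-b|^r$ together with the $W^{1,r}$-Lipschitz dependence of the solution on the boundary datum $j(\phi)$ (definition \eqref{defj}) and the trace convergences \eqref{ae3c}, \eqref{ae3a}, keeping the exponent $1+\varepsilon/2$ fixed; you instead combine the $m$-uniform bound $\|\nabla\phi_m\|_{2+\varepsilon}\leq R_3$ with the a.e.\ convergence from Proposition \ref{aeconv} and Vitali's theorem, at the cost of shrinking $\varepsilon$ (admissible, since the statement only asks for some $\varepsilon>0$). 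Your route is arguably more robust, because the paper's final display implicitly treats the coefficient perturbation $(\sigma(\bm{\varrho}_m,\xi_m)-\sigma(\bm{\varrho},\xi))|\nabla\phi|^2$ as if it too were controlled by the Lipschitz estimate, which your dominated-convergence/Vitali argument avoids. Two small points to tighten: the bound $\|j(\phi)\|_{(V_r(\Omega_a\cup\Omega_c))'}\leq j_L|\Gamma_{CL}|$ hides a trace-embedding constant (the paper glosses this as well, so your ``absorb into $M_r$'' matches its level of rigor), and since Proposition \ref{aeconv} gives a.e.\ convergence only along a subsequence, you should invoke the subsequence principle (every subsequence has a further subsequence with the same limit) to conclude convergence of the full sequence in $L^{1+\varepsilon/2}$.
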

\begin{proof}
Let \(\mathbf{Y} = (\bm{\varrho},\xi)\in [  H^1(\Omega_\mathrm{p}) ]^{\mathrm{I}+1}\)and \(\varphi \in  H^1(\Omega_\mathrm{p})\)  be fixed and let
  \(\phi_{cc}\in V(\Omega_p)\) be the solution of the variational equality \eqref{aux-phicc}.
Let us define the operator  \(A:V(\Omega_a\cup \Omega_c)\rightarrow \left(V(\Omega_a\cup \Omega_c)\right)'\) by
\[
\langle A(\mathbf{Y};\phi),w\rangle =\int_{\Omega_\mathrm{a}\cup \Omega_\mathrm{c}}\sigma(\mathbf{Y}) \nabla\phi\cdot\nabla w\dif{x}.
\]

By the uniqueness of solution,  the solution   \(\phi_{cc}\) verifies
\[
\langle A(\mathbf{Y};\phi),w\rangle = \int_{\Gamma_\mathrm{CL}}j(\phi)w\dif{s}, \ \forall w\in V(\Omega_a\cup \Omega_c),
\]
where \(j\) is defined in \eqref{defj}.

Denoting by \(A_r\) the restriction of \(A\) to \(V_r(\Omega_a\cup \Omega_c)\), and \(f=j(\phi)\in L^\infty(\Gamma_\mathrm{CL})\subset (V_r(\Omega_a\cup \Omega_c))'\),
  for any \(r>2\), the regularity established in the celebrated paper  by Gr\"oger and Rehberg \cite{groreh}
guarantees that there exists a \(r_0>2\) such that  \(A_r\) is bijective from \(V_r(\Omega_a\cup \Omega_c)\) onto \((V_r(\Omega_a\cup \Omega_c))'\),
for every \(r\in [2,r_0]\). The existence of \(r_0>2\) is determined by a class of the domain, which is formulated in \cite{gro89}. Indeed, the domains
\(\Omega_\mathrm{a}\) and \( \Omega_\mathrm{c}\) are regular in the sense formulated in  \cite{gro89},  for every \(r_0\geq 2\).
In particular,  it is proved that \(M_r< \sigma^\#/\sqrt{(\sigma^\#)^2-\sigma_\#^2}\), with 
\[
M_r :=\sup\{\|v\|_{1,r,\Omega_a\cup \Omega_c}: \ v\in V_r(\Omega_a\cup \Omega_c), \| Jv\|_{(V_r(\Omega_a\cup \Omega_c))'} \leq 1\},
\]
where
\[
\langle J\phi,w\rangle =\int_{\Omega_\mathrm{a}\cup \Omega_\mathrm{c}} \nabla\phi\cdot\nabla w\dif{x}.
\]
We remind that the dual space \(X'\) is equipped with
the usual induced norm \(\|f\|_{X'}=\sup \lbrace  \langle f,w\rangle, \ w\in X: \|w\|_X \leq 1 \rbrace\).

Moreover, the following estimate 
\[
\|A^{-1}\|_{\mathcal{L}((V_r(\Omega_a\cup \Omega_c))'; V_r(\Omega_a\cup \Omega_c))}\leq \frac{\sigma_\#M_r}{\sigma^\#
\left(\sigma^\#-M_r \sqrt{(\sigma^\#)^2-\sigma_\#^2}\right) }
\]
holds true,  where \(\|A^{-1}\|_{\mathcal{L}(Y;X)}=\sup\{\|A^{-1}y\|_X: \|y\|_Y\leq 1\}\).
Indeed, the following estimate
\[
\|\nabla\phi_1-\nabla\phi_2\|_{r,\Omega_a\cup \Omega_c}\leq \frac{\sigma_\#M_r}{\sigma^\#
\left(\sigma^\#-M_r \sqrt{(\sigma^\#)^2-\sigma_\#^2}\right) }\|j(\phi_1)-j(\phi_2)\|_{(V_r(\Omega_a\cup \Omega_c))'}
\]
holds true and plays an essential role. Indeed,  
the quantitative estimate  \eqref{cotajoule} holds true, due to the limiting current bound.

Considering the inequalities
\[ 
|a^2-b^2|^{r/2} \leq (a+b)^{r/2}|a-b|^{r/2}
\leq \frac{1}{2}(a+b)^{r} +  \frac{1}{2}|a-b|^{r}, \ \forall a,b>0,
\] 
we conclude that
\begin{align*}
\|\sigma(\mathbf{Y}_m)|\nabla\phi_m|^2-\sigma(\mathbf{Y})|\nabla\phi|^2\|_{r/2,\Omega_a\cup \Omega_c}  \\ \leq 
\|\sigma(\mathbf{Y}_m)\left(|\nabla\phi_m|^2-|\nabla\phi|^2\right)\|_{r/2,\Omega_a\cup \Omega_c}+
\|(\sigma(\mathbf{Y}_m)-\sigma(\mathbf{Y})) |\nabla\phi|^2\|_{r/2,\Omega_a\cup \Omega_c}\\
\leq \frac{\sigma_\#M_r}{\sigma^\#
\left(\sigma^\#-M_r \sqrt{(\sigma^\#)^2-\sigma_\#^2}\right) }\|j(\phi_m)-j(\phi)\|_{(V_r(\Omega_a\cup \Omega_c))'}.
\end{align*}
Therefore, the final claim is obtained,  by taking the strong convergences \eqref{ae3c}\eqref{ae3a} into account.
\end{proof}

By  considering Proposition \ref{regular},
we may define \(t = 1+\varepsilon/2\) that obeys \eqref{deft}.

\section{Fixed point argument(Proof of Theorem \ref{tmain})}
\label{fpthm}

Our aim is to apply the Tychonoff fixed point theorem to the operator \(\mathcal{T}\) defined in \eqref{fpa}.

The closed ball \(K\subset E= H(\Omega_p) \times  [V(\Omega)]^{\mathrm{I}+1}\times V(\Omega_p)
\times L^t(\Omega_\mathrm{a}\cup\Omega_\mathrm{c})\),  \(t>1\), defined as
\[
K= \lbrace (\pi, \bm{\varrho}, \xi,\varphi, \Phi): \ \|\pi\|\leq R_1, \left( \|\bm{\varrho}\|^2+\| \xi\|^2+\|\varphi\|^2\right)^{1/2} \leq R_2,
\|\Phi\|\leq R_3 \rbrace
\]
 is compact when the topological vector space is provided by the weak topology, or simply weakly compact, because \(E\) is reflexive.
The radius \(R_1\), \(R_2\) and \(R_3\) are the positive constants defined in \eqref{R1},  in the below cases (1) and (2), and in \eqref{cotajoule}, respectively.

The operator \(\mathcal{T}\) is well defined for \(n=2,3\),  due to Proposition \ref{proppxi}, 
considering that \(V(\Omega)\hookrightarrow L^4(\Omega)\) for \(n=2,3\), and due to Propositions
 \ref{proptt} and  \ref{regular}, 
taking \eqref{deft} and  \(\mathbf{w}=\mathbf{u}\in \mathbf{H}^{1}(\Omega_\mathrm{f}) \hookrightarrow \mathbf{L}^q(\Omega_\mathrm{f}) \) into  account, observing that
\(2n/(n-2)\geq  q\geq n=3,4\) or any \(q>n=2\).
Its continuity is due to Propositions \ref{upm} and \ref{propttm}-\ref{aeconv}, by providing  \( 6>q> n=3\) or
 any \(q> n=2\).

It remains to prove that \(\mathcal{T}\) maps \(K\) into itself. Let \( ( \pi, \bm{\varrho}, \xi,\varphi, \Phi)\in K\) be given, and 
\((p,\bm{\rho},\theta,\phi, |\nabla\phi|_{\Omega_\mathrm{a}\cup\Omega_\mathrm{c}}|^2)=\mathcal{T} (  \pi, \bm{\varrho}, \xi,\varphi, \Phi)\).
In particular there exists
 the auxiliary velocity field  \(\mathbf{u}=\mathbf{u}(\bm{\varrho}|_{\Omega_\mathrm{f}},\xi)\) being in accordance with  Section \ref{auxup}.

On the one hand, the estimate  \eqref{cotaup} may be rewritten 
\begin{align}
  \left( \|  D\mathbf{u} \|_{2,\Omega_\mathrm{f} }^2+ \|\mathbf{u}_T\|_{2,\Gamma} ^2\right)^{1/2} \leq aR_2^2+\frac{1}{\sqrt{\min\{\mu_\#/2,\beta_\#\}} }C_0; \\
\label{R1}
\|  \nabla p\|_{2,\Omega_\mathrm{p}} \leq  \sqrt{\frac{\mu^\#}{K_l} } \left(\frac{R_\mathrm{specific}}{\sqrt{\mu_\#}}R_2^2+C_0\right):= R_1 ,
\end{align}
where
\begin{align*}
a:=&\frac{1}{\sqrt{\min\{\mu_\#/2,\beta_\#\}} } \frac{R_\mathrm{specific}}{\sqrt{\mu_\#}} ; \\
C_0:= &\sqrt{\mu^\#}\|D\mathbf{u}_0\|_{2,\Omega_\mathrm{f}} 
+\sqrt{ \lambda^\#} \|\nabla\cdot\mathbf{u}_0\|_{2,\Omega_\mathrm{f}}  +\sqrt{\max\{\beta^\#,\frac{\mu^\#}{K_l}}\}\|\mathbf{u}_0\|_{2,\Gamma}.
\end{align*}

On the other hand, the estimate \eqref{cotattphi} yields
\begin{align}
\|\bm{\rho}\|_{2,\Omega}^2+\| \theta\|_{2,\Omega}^2 +\|\phi\|_{2,\Omega_\mathrm{p}}^2\leq  c/a_\# &\mbox{ if } b-aR_2^2\geq a_\#  ;\\
\|\bm{\rho}\|_{2,\Omega}^2+\| \theta\|_{2,\Omega}^2+\|\phi\|_{2,\Omega_\mathrm{p}}^2 \leq c/(b-aR_2^2) b&\mbox{ if } b-aR_2^2 < a_\#  ,
\end{align}
where
\begin{align*}
a_\#:=& \min_{j=1,\cdots,\mathrm{I}+2} a_{j,\#} ;\\
b:=&\min_{i=1,\cdots,\mathrm{I} } \frac{D_{i,\#}}{S^*}- \frac{1}{\sqrt{\min\{\mu_\#/2,\beta_\#\}} }C_0;\\
c:=&\frac{(S^*\sigma^\#)^2}{k_\#} R_3^2+\frac{ j_L^2 }{\min\{\sigma_\#,\sigma_m/2\}} + h^\#\|\theta_e\|_{2,\Gamma_\mathrm{w}}^2.
\end{align*}

The existence of \(R_2>0\) is guaranteed in both cases:
\begin{enumerate}
\item The case \( b-aR_2^2\geq a_\#\) means
\[c/a_\# \leq x:=R_2^2\leq (b-a_\#)/a, \]
which is true if provided by
\begin{equation}\label{small1}
 b\geq a_\# +ac/a_\#.
\end{equation}
\item The case \( b-aR_2^2 < a_\#\) means
\begin{align*}
(b-a_\#)/a<x:=R_2^2<b/a; &\\
b-\sqrt{\Delta} \leq 2ax \leq b+\sqrt{\Delta} & \mbox{ if } \Delta = b^2-4ac >0,
\end{align*}
which is true if provided by 
\begin{equation}\label{small2}
2\sqrt{ac} < b <2 a_\# +\sqrt{\Delta}.
\end{equation}
\end{enumerate}

Therefore, Theorem \ref{tmain} is completely proved.

\end{document}